\newtheorem{theorem}{Theorem}[section]
\newtheorem{lemma}[theorem]{Lemma}
\newtheorem{proposition}[theorem]{Proposition}
\theoremstyle{definition}
\numberwithin{equation}{section}
\begin{document}

\title[Multiplicity of positive solutions of an indefinite superlinear problem]{A proof of a conjecture \\on the multiplicity of positive solutions \\of an indefinite superlinear problem}

\author[G.~Feltrin]{Guglielmo Feltrin}

\address{
Dipartimento di Scienze Matematiche, Informatiche e Fisiche\\
Universit\`{a} degli Studi di Udine\\
Via delle Scienze 206, 33100 Udine, Italy}

\email{guglielmo.feltrin@uniud.it}

\author[J.~L\'{o}pez-G\'{o}mez]{Juli\'{a}n L\'{o}pez-G\'{o}mez}

\address{
Instituto de Matem\'{a}tica Interdisciplinar\\
Departamento de An\'{a}lisis Matem\'{a}tico y Matem\'{a}tica Aplicada\\
Universidad Complutense de Madrid\\
Plaza de las Ciencias 3, 28040 Madrid, Spain}

\email{jlopezgo@ucm.es}

\author[J.~C.~Sampedro]{Juan Carlos Sampedro}

\address{
Instituto de Matem\'{a}tica Interdisciplinar \\ Departamento de Matem\'{a}tica Aplicada a la Ingenier\'{i}a Industrial\\
Universidad Polit\'{e}cnica de Madrid\\
Ronda de Valencia 3, 28012 Madrid, Spain}

\email{juancarlos.sampedro@upm.es}

\subjclass{34B18, 34B08, 35B09, 35J25.}

\keywords{Dirichlet problem, positive solutions, multiplicity, indefinite weights, superlinear nonlinearities, degree theory.}

\date{}

\begin{abstract}
This paper solves in a positive manner a conjecture stated in 2000 by R.~G\'omez-Re\~nasco and J.~L\'opez-G\'omez regarding the multiplicity of positive solutions of a paradigmatic class of superlinear indefinite boundary value problems.
\end{abstract}

\thanks{The first author acknowledges the Grup\-po Na\-zio\-na\-le per l'Anali\-si Ma\-te\-ma\-ti\-ca, la Pro\-ba\-bi\-li\-t\`{a} e le lo\-ro Appli\-ca\-zio\-ni (GNAMPA) of the Isti\-tu\-to Na\-zio\-na\-le di Al\-ta Ma\-te\-ma\-ti\-ca (INdAM) and the support of PRIN Project 20227HX33Z ``Pattern formation in nonlinear phenomena''. The second and third author are supported by the Ministry of Science and Innovation of Spain under Research Grant PDI2021-123343NB-I00 and the Institute of Interdisciplinary Mathematics of the Complutense University of Madrid.
\\
\textbf{Preprint -- January 2025}}

\maketitle

\section{Introduction}\label{section-1}

In this paper, we prove a result conjectured via numerical experiments in 2000 by G\'omez-Re\~nasco and L\'opez-G\'omez in \cite[p.~19]{GRLG-00} concerning the number of positive solutions of the boundary value problem
\begin{equation}\label{1.1}
\begin{cases}
\, -u''=\lambda u+a(x) u^{p} \quad \hbox{in}\;\; (0,L),
\\ \, u(0)=u(L)=0,
\end{cases}
\end{equation}
where $p>1$, $\lambda\in\mathbb{R}$ is regarded as a parameter, and $a(x)$ is a sign-changing weight function.
\par
The positive solutions of  \eqref{1.1} can be seen as the steady states of the parabolic problem
\begin{equation}\label{eq-parabolic}
\begin{cases}
\, \frac{\partial u}{\partial t} - \frac{\partial^2 u}{\partial x^2}  = \lambda u + a(x) u^{p},
&(x,t)\in (0,L) \times (0,+\infty),
\vspace{2pt}\\  
\, u(0,t) =u(L,t)= 0,
& t \in (0,+\infty), 
\\
\, u(\cdot,0) = u_{0} \geq 0, & \hbox{in}\;\; (0,L).
\end{cases}
\end{equation}
Diffusive problems of this type have been widely analyzed in Population Dynamics. More generally, the monographs of Cantrell and Cosner \cite{CC-03}, L\'{o}pez-G\'{o}mez \cite{LG-16}, Murray \cite{Mu-93} and Okubo \cite{Ok-80} discuss the following multidimensional counterpart of  \eqref{eq-parabolic}
\begin{equation}\label{eq-paramult}
\begin{cases}
\, \frac{\partial u}{\partial t} - \Delta u  = \lambda u + a(x) u^{p},
&(x,t)\in \Omega \times (0,+\infty),
\vspace{2pt}\\  
\, u(x,t) = 0,
& (x,t)\in\partial\Omega \times (0,+\infty),
\\
\, u(\cdot,0) = u_{0} \geq 0, & \hbox{in}\;\; \Omega,
\end{cases}
\end{equation}
where $\Omega$ is a smooth bounded subdomain of $\mathbb{R}^N$, for some integer $N\geq 1$. In these
models, $u(x,t)$ represents the density at time $t$, at the location $x\in\Omega$, of a species inhabiting the territory $\Omega$, $\lambda$ stands for its growth rate if $\lambda>0$, or its death rate
if $\lambda<0$, and $u_{0}$ is the initial population density. In nature $\lambda<0$, for example,
when pesticides are used in high concentrations, or a certain patch of the
environment is polluted by introducing chemicals, waste products or poisonous substances. Rather paradoxically, the harsher the environmental conditions the richer the dynamics of the species, as discussed by L\'{o}pez-G\'{o}mez, Molina-Meyer and Tellini in \cite{LGMMTe-14}.
\par
Dirichlet boundary conditions entail that the habitat edges, modeled by $\partial\Omega$,  are completely hostile for the individuals of the species $u$. In \eqref{eq-paramult}, the term $a(x) u^{p}$ represents the reaction component. Although in Mathematical Biology the coefficient $a(x)$ is typically non-positive, the fact that $a(x)$ can change sign in this paper reflects the existence of some facilitative environmental conditions within the patches where $a(x)>0$. Precisely, in  \eqref{eq-paramult}, the weight function $a(x)$ can represent a ``food source'' for the population which can be positive, zero or negative in the habitat $\Omega$ meaning that the environment is respectively favorable, neutral or unfavorable for the population. Naturally, the search of steady states of \eqref{eq-paramult} and the investigation of their stability is crucial in order to analyze extinction, persistence or coexistence for the underlying population.
\par
Starting from the Eighties, significant attention has been devoted to problems of the form \eqref{1.1}, covering the more general PDE
\begin{equation}
\label{eq-PDE-intro}
\, -\Delta u = \lambda u + a(x) g(u),
\end{equation}
for some smooth $g \colon[0,+\infty)\to [0,+\infty)$. These problems  are known as \textit{superlinear indefinite problems}. The term ``indefinite'' refers to the fact that the weight function $a(x)$ has no definite sign.
This terminology has become widely used starting from Hess and Kato \cite{HeKa-80}. The term ``superlinear'' describes the growth of the nonlinear term $g$ which is of superlinear type, i.e., $g(u) \sim u^{p}$ with $p>1$ as $u\uparrow \infty$. From the pioneering findings of Brown and Lin \cite{BrLi-80} and Hess and Kato \cite{HeKa-80} and the subsequent papers of Alama and Tarantello \cite{AlTa-93,AlTa-96}, Amann and
L\'{o}pez-G\'{o}mez \cite{AmLG-98}, Berestycki, Capuzzo-Dolcetta and Nirenberg \cite{BeCDNi-94,BeCDNi-95}, and L\'{o}pez-G\'{o}mez \cite{LG-97} many others have followed, dealing also with more general nonlinearities $g$ or other boundary value problems associated with \eqref{eq-PDE-intro}.
Recent literature indicates that this remains a very active area of research. The interested reader is sent to L\'{o}pez-G\'{o}mez and Tellini \cite{LGTe-14},  L\'{o}pez-G\'{o}mez, Tellini and Zanolin \cite{LGTeZa-14} and Tellini \cite{Te-20} for some results addressing \eqref{1.1}, and to the monographs of Feltrin \cite{Fe-18} and L\'{o}pez-G\'{o}mez \cite{LG-16} for a more complete list of references.
\par
Thanks to Amann and L\'{o}pez-G\'{o}mez \cite{AmLG-98}, it is well known that \eqref{1.1} possesses an unbounded component of positive solutions, $\mathscr{C}^+$, of $\mathbb{R} \times \mathcal{C}([0,L])$ such that $(\tfrac{\pi^{2}}{L^{2}},0)\in\overline{\mathscr{C}^+}$, i.e., $\mathscr{C}^+$ bifurcates from $u=0$ at $\lambda=\tfrac{\pi^{2}}{L^{2}}$. Moreover, \eqref{1.1} cannot admit a positive solution for sufficiently large $\lambda>\tfrac{\pi^{2}}{L^{2}}$. Furthermore, by the existence of universal a priori bounds uniform on compact subintervals of $\lambda\in\mathbb{R}$ for the positive solutions
of \eqref{1.1}, it is apparent that
\begin{equation*}
   (-\infty,\tfrac{\pi^{2}}{L^{2}})\subseteq \mathcal{P}_\lambda(\mathscr{C}^+),
\end{equation*}
where $\mathcal{P}_\lambda$ stands for the $\lambda$-projection operator defined by
\begin{equation*}
\mathcal{P}_\lambda (\lambda,u)=\lambda,
\quad (\lambda,u)\in \mathbb{R} \times \mathcal{C}([0,L]).
\end{equation*}
Actually, according to G\'{o}mez-Re\~{n}asco and L\'{o}pez-G\'{o}mez \cite{GRLG-00,GRLG-01}, either $\mathcal{P}_\lambda(\mathscr{C}^+)=(-\infty,\tfrac{\pi^{2}}{L^{2}})$, or there exists $\lambda_t>\tfrac{\pi^{2}}{L^{2}}$ such that $\mathcal{P}_\lambda(\mathscr{C}^+)=(-\infty,\lambda_t]$. Moreover, \eqref{1.1} admits some stable
positive solution if and only if $\lambda\in (\tfrac{\pi^{2}}{L^{2}},\lambda_t]$, and, in such a case, the stable solution is unique, and it equals the minimal positive solution of \eqref{1.1}. The fact that $\lambda_t$ is a ``turning point'' is emphasized by its subindex. By a recent result of Fern\'{a}ndez-Rinc\'{o}n and L\'{o}pez-G\'{o}mez \cite{FRLG-21}, the latest results require the nonlinearity to be of power type, i.e., there should exist a $p>1$ such that  $g(u)=u^p$ for all $u\geq 0$ in \eqref{eq-PDE-intro}.
\par
Throughout this paper, we assume that $a\in\mathcal{C}([0,L])$ and that
\begin{enumerate}[leftmargin=26pt,labelsep=8pt,label=\textup{$(\mathrm{H}a)$}]
\item for some integer $n\geq 1$, there are $2n+2$ points
\begin{equation*}
0=\tau_{0} \leq \sigma_{1}<\tau_{1}<\sigma_{2}<\tau_{2}<\cdots<\sigma_{n}<\tau_{n} \leq \sigma_{n+1}=L
\end{equation*}
such that
\label{hp-Ha}
\begin{align*}
&a(x)>0 \;\;\hbox{for all $x\in I^{+}_{i}\coloneqq(\sigma_{i},\tau_{i})$, $i=1,\dots, n$,}
\\
&a(x)<0 \;\; \hbox{for all $x\in I^{-}_{i}\coloneqq(\tau_{i},\sigma_{i+1})$, $i=0,\dots,n$.}
\end{align*}
\end{enumerate}
Condition \ref{hp-Ha} means that there are $n$ intervals where $a>0$ separated away by intervals where $a<0$.
Moreover, we require $a$ to satisfy the following growth condition:
\begin{enumerate}[leftmargin=26pt,labelsep=8pt,label=\textup{$(\mathrm{G})$}]
\item for every $i\in\{1,\ldots,n\}$, there exist $\rho_{i} \colon [\sigma_{i},\tau_{i}]\to\mathbb{R}$ continuous and bounded away from zero in a neighborhood of $\partial I^{+}_{i} = \{\sigma_{i},\tau_{i}\}$ and a constant $\gamma_i>0$ such that
\begin{equation*}
a(x) = \rho_{i}(x)[\mathrm{dist}(x,\partial I^{+}_{i})]^{\gamma_i} \quad \hbox{for all}\;\; x\in I^{+}_{i}.
\end{equation*}
\label{hp-G}
\end{enumerate}
By a positive solution of \eqref{1.1} we mean a function $u\in\mathcal{C}^2([0,L])$ with $u\gneq 0$ (i.e. $u\geq 0$ but $u\not =  0$), solving \eqref{1.1}. Since $u\gneq 0$,  by the Theorem of Cauchy--Lipschitz, $u'(0)>0$, $u'(L)<0$, and $u(x)>0$ for all $x\in (0,L)$.
\par
Based on a series of systematic numerical experiments, in \cite{GRLG-00} it was conjectured that, under these conditions, there is a $\lambda_c<\tfrac{\pi^{2}}{L^{2}}$ such that, for every $\lambda < \lambda_c$, the problem \eqref{1.1} possesses, at least, $2^{n}-1$ positive solutions: among them, $n$ with a single peak around each of the maxima of $a(x)$ in the intervals $I^+_i$, $\frac{n(n-1)}{2}$ with two peaks, and, in general, $\frac{n!}{j!(n-j)!}$ with $j$ peaks for every $j\in\{1,\ldots,n\}$. This conjecture led to the investigation of various types of multiplicity results for positive solutions in this kind of problems. As the main goal of this paper is to deliver a proof of this conjecture, we will describe the main lines of research that inspired our proof.
\par
Assume \ref{hp-Ha}, let $a^{\pm}(x)$ denote the positive/negative part of $a(x)$, and consider, instead of \eqref{1.1},  the following boundary value problem
\begin{equation}\label{eq-FZ}
\begin{cases}
\, -u'' = \left(a^+(x)-\mu \, \dfrac{a^-(x)}{\|a^-\|_{L^\infty}} \right)u^p \quad\hbox{in}\;\; (0,L),
\\
\, u(0) = u(L) = 0,
\end{cases}
\end{equation}	
where
\begin{equation*}
   \mu\equiv \|a^-\|_{L^\infty}\in(0,+\infty)
\end{equation*}
is regarded as a parameter. The problem \eqref{eq-FZ} is a particular case of \eqref{1.1}, with $\lambda =0$,  where the weight function $a(x)$ has been factorized within the spirit of L\'{o}pez-G\'{o}mez \cite{LG-97,LG-00}. Concerning \eqref{eq-FZ}, assuming that $a(x)$ satisfies \ref{hp-Ha} with $n=2$ or $n=3$,  the existence of, at least, $2^{2}-1=3$ and $2^{3}-1=7$ positive solutions, respectively,  for sufficiently large $\mu>0$ was established in Gaudenzi, Habets and Zanolin \cite{GaHaZa-03,GaHaZa-04} via a shooting technique. More recently, the existence of at least $2^{n}-1$ positive solutions of \eqref{eq-FZ} for arbitrary $n\geq 2$ has been proved in Feltrin and Zanolin \cite{FeZa-15} with a combinatorial argument involving the topological degree. Some multidimensional versions of these results were given in Bonheure, Gomez and Habets  \cite{BoGoHa-05} and in Gir\~{a}o and Gomes \cite{GiGo-09}, and some exact multiplicity results have been found in a  recent paper of Feltrin and Troestler, \cite{FeTr-pp}. A similar topological approach has been used by Feltrin, Sampedro and Zanolin  in \cite{FeSaZa-23} in dealing with a class of  planar superlinear systems.
\par
However, the results concerning \eqref{eq-FZ} do not solve the conjecture of G\'{o}mez-Re\~{n}asco and L\'{o}pez-G\'{o}mez \cite{GRLG-00} because the
roles of the parameters $\lambda$ and $\mu$ in \eqref{1.1} and in \eqref{eq-FZ} are substantially different, apart from being independent. Thus, after 25 years, the conjecture of \cite{GRLG-00} remains completely open. Our main aim in this paper is to prove this conjecture through the next result.

\begin{theorem}\label{th1.1}
Let $p>1$ and $a(x)$ satisfy \ref{hp-Ha} and \ref{hp-G}. Then, there exists $\lambda_{c}=\lambda_c(a)<0$ such that, for every $\lambda < \lambda_{c}$, the problem \eqref{1.1} possesses, at least, $2^{n}-1$ positive solutions.
\end{theorem}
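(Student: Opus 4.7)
The strategy is to adapt the combinatorial Leray--Schauder degree approach developed by Feltrin and Zanolin in \cite{FeZa-15} for the $\mu$-parametric problem \eqref{eq-FZ} to the $\lambda$-parametric problem \eqref{1.1}. Heuristically, for $\lambda$ sufficiently negative the term $-\lambda u$ acts as a strong linear absorber on the negative intervals $I_i^-$, forcing any positive solution to effectively decouple across the positive intervals $I_i^+$. On each $I_i^+$ one expects two solution branches, ``small'' and ``large''; choosing independently on each $I_i^+$ yields $2^n$ configurations, of which $2^n-1$ are nontrivial. Concretely, for $\lambda<0$ the operator $u\mapsto -u''-\lambda u$ with Dirichlet data is invertible, so \eqref{1.1} rewrites as a fixed point equation $u=\mathcal{T}_{\lambda}(u)\coloneqq(-\tfrac{d^2}{dx^2}-\lambda)^{-1}\bigl[a(x)u^{p}\bigr]$ with $\mathcal{T}_\lambda$ compact on $\mathcal{C}([0,L])$, and the universal a priori bound $\mathcal{M}>0$ from \cite{AmLG-98} confines all positive solutions to the ball $\|u\|_\infty<\mathcal{M}$.

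\textbf{A priori gap estimate.} The crucial analytic ingredient is a \emph{gap property}: for $\lambda$ sufficiently negative, every positive solution $u$ of \eqref{1.1} satisfies, for each $i\in\{1,\dots,n\}$,
\begin{equation*}
\max_{I_i^+}u\le r(\lambda)\quad\text{or}\quad\max_{I_i^+}u\ge R(\lambda),\qquad 0<r(\lambda)<R(\lambda).
\end{equation*}
The proof combines two ingredients. First, a maximum-principle comparison on each $I_i^-$: since $u''\ge|\lambda|u$ there, $u$ is dominated by the solution of the linear problem $-v''+|\lambda|v=0$ with boundary data bounded by $\mathcal{M}$, hence $u$ is exponentially small (on the scale $|\lambda|^{-1/2}$) away from $\partial I_i^-$, effectively decoupling the problem into $n$ Dirichlet subproblems with small boundary data on the $I_i^+$'s. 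Second, a local time-map / phase-plane analysis on each $I_i^+$: the power behavior of $a$ at $\partial I_i^+$ guaranteed by \ref{hp-G} controls the relevant quadrature integrals and rules out solutions with intermediate maximum via a Pohozaev-type identity in the spirit of \cite{GaHaZa-03,GaHaZa-04,FeZa-15}.

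\textbf{Combinatorial degree and conclusion.} For every nonempty $J\subseteq\{1,\ldots,n\}$, define the pairwise-disjoint open sets
\begin{equation*}
\Omega_J\coloneqq\bigl\{u\in\mathcal{C}([0,L]):\|u\|_\infty<\mathcal{M},\;\max\nolimits_{I_i^+}u>R(\lambda)\text{ if }i\in J,\;\max\nolimits_{I_i^+}u<r(\lambda)\text{ if }i\notin J\bigr\}.
\end{equation*}
The gap property ensures $u\neq\mathcal{T}_\lambda(u)$ on $\partial\Omega_J$, so $\deg(I-\mathcal{T}_\lambda,\Omega_J)$ is well defined. Homotoping $a$ to ``switch off'' its positive bumps at indices $i\notin J$ (for instance, deforming $a|_{I_i^+}$ continuously to a negative function while keeping the gap structure intact), together with the additivity-excision of the Leray--Schauder degree and the combinatorial identity of \cite{FeZa-15}, yields $\deg(I-\mathcal{T}_\lambda,\Omega_J)\neq 0$. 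Each $\Omega_J$ therefore contains a positive solution of \eqref{1.1}, and the disjointness of the $\Omega_J$'s produces $2^n-1$ distinct positive solutions. \emph{The main obstacle} is the gap estimate: quantitatively separating the small and large branches uniformly for $\lambda\ll 0$ and transferring this separation from the asymptotic decoupled problem on each $I_i^+$ to the full coupled problem. Condition \ref{hp-G} is essential here, as the precise power vanishing of $a$ at $\partial I_i^+$ governs the quadrature estimates that preclude a continuum of intermediate-size solutions.
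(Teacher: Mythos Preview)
Your overall strategy---a Leray--Schauder degree argument on combinatorial open sets indexed by $J\subseteq\{1,\dots,n\}$, a gap property separating ``small'' from ``large'' profiles on each $I_i^+$, and the combinatorial additivity identity from \cite{FeZa-15}---is exactly the framework of the paper. However, two key technical implementations diverge from the paper and, as written, leave genuine gaps.

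\textbf{The degree-zero homotopy.} You propose to deform $a|_{I_i^+}$ continuously to a negative function for $i\notin J$. This is \emph{not} how the paper proceeds, and your version is problematic: along such a homotopy $a$ must pass through configurations where it vanishes on subintervals of $I_i^+$, and both your a priori bound $\mathcal{M}$ (which relies on the superlinear structure where $a>0$) and the gap property may break down. You assert that the ``gap structure'' is kept intact but give no mechanism for this. The paper instead keeps $a$ fixed and adds a nonnegative forcing term $\mu w$ supported on $\bigcup_{i\in\mathcal{I}}I_i^+$ with the same boundary growth as $a$; a blow-up argument (using a Liouville theorem on a half-line and the growth hypothesis \ref{hp-G}) gives an a priori bound $R_\lambda$ \emph{uniform in $\mu\ge 0$}, and a test against the principal eigenfunction shows no solution exists for $\mu$ large, forcing the degree on $\Omega^{\mathcal{I}}_\lambda$ to vanish. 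This is the step where \ref{hp-G} is actually used---in the blow-up/a priori bound argument when the maximum concentrates at $\partial I_i^+$---not in the gap property, as you suggest.

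\textbf{The gap property.} You invoke a ``local time-map/phase-plane analysis'' and a ``Pohozaev-type identity'' to rule out intermediate maxima. The paper's argument is both different and much simpler: if the maximum of $u$ on $\overline{I_i^+}$ is attained at an interior point, then $-u''\ge 0$ there forces $\|u\|_{L^\infty(I_i^+)}\ge(-\lambda/\|a\|_\infty)^{1/(p-1)}\to\infty$; if it is attained at a boundary point $\sigma_i$ or $\tau_i$, the exponential decay on the adjacent $I_{i-1}^-$ (which you correctly identify) combined with convexity of $u$ on $I_{i-1}^-$ shows the maximum must be below any prescribed $\rho$. No Pohozaev identity is needed, and a \emph{single fixed} threshold $\rho$ suffices rather than two $\lambda$-dependent thresholds $r(\lambda)<R(\lambda)$. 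Your sketch is not wrong in spirit, but the analytic mechanism you cite is not the one that works here, and you flag this as ``the main obstacle'' without resolving it.
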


Although there is a large literature on bump and multi-bump solutions for nonlinear
Schr\"{o}dinger equations, starting with Ambrosetti, Badiale and Cingolani \cite{AmBaCi-97}, del Pino and Felmer \cite{PiFe-97,PiFe-98}, Dancer and Wei \cite{DaWe-98}, Wei \cite{We-96},  Wang and Zeng \cite{WaZe-97}, up to, e.g., Byeon and Tanaka \cite{ByTa-13}, in most of these papers $a(x)$ is a positive function. As a consequence,  none of these results (mostly based on classical   techniques of the calculus of variations, like appropriate variants of the mountain--pass lemma of Ambrosetti and Rabinowitz \cite{AmRa-73})  can be applied without some additional substantial work to our general nodal setting.
\par
This paper is organized as follows. Section~\ref{section-2} presents a one-dimensional nonlinear Liouville theorem that is employed to provide a priori bounds for the non-negative solutions of \eqref{1.1}. In Section~\ref{section-3}, we present the topological degree setting and  combine it with these a priori bounds  to prove Theorem~\ref{th1.1}.
\par
Throughout this paper, for every pair of intervals,  $J\subseteq\mathbb{R}$ and  $K\subseteq J$, and any function $v\in L^{\infty}(J)$, we will set
\begin{equation*}
   \|v\|_{L^{\infty}(K)} \coloneqq \sup_{x\in K} |v(x)|, \qquad
   \|v\|_{L^{\infty}} \coloneqq \|v\|_{L^{\infty}(J)}.
\end{equation*}
\par

\section{A priori estimates}\label{section-2}

The aim of this section is to present some preliminary technical results, which we will subsequently exploit in Section~\ref{section-3} for the proof of Theorem~\ref{th1.1}. Some of them inherit a great interest on their
own.
\par
First, we provide a nonlinear Liouville-type theorem that will be useful for establishing a priori bounds for the non-negative solutions of \eqref{1.1}.

\begin{lemma}\label{leii.1}
Let $\alpha>0$, $\gamma\geq0$, $\kappa\geq0$, and $\beta\in\mathcal{C}([-\kappa,+\infty), [0,+\infty))$. Then, the boundary value problem
\begin{equation}\label{ii.1}
\begin{cases}
\, -u''=\alpha \,(x+\kappa)^{\gamma} \, u^{p} + \beta(x) \quad\hbox{in}\;\; [-\kappa,+\infty),
\\ \, u(0)=1, \\ \, 0\leq u \leq 1,
\end{cases}
\end{equation}
cannot admit a non-negative solution $u\in\mathcal{C}^{2}([-\kappa,+\infty))$.
\end{lemma}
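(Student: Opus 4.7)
My plan is to argue by contradiction. Suppose $u\in\mathcal{C}^2([-\kappa,+\infty))$ is a non-negative solution of \eqref{ii.1} and exploit the competition between concavity (forced by the sign of the right-hand side) and the super-linear reaction term $\alpha(x+\kappa)^\gamma u^p$, which is genuinely positive as soon as $u$ stays bounded away from $0$. Since $\alpha>0$, $\beta\ge 0$, $u\ge 0$, and $(x+\kappa)^\gamma\ge 0$, the equation yields $u''\le 0$ throughout $[-\kappa,+\infty)$, so $u$ is concave. The normalization $u(0)=1$ together with $u\le 1$ makes $x=0$ a point where $u$ attains its global maximum, so $u'(0)=0$ when $\kappa>0$ (interior critical point) and $u'(0)\le 0$ when $\kappa=0$ (boundary). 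Concavity then upgrades this to $u'(x)\le 0$ for every $x\ge 0$.

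The heart of the argument is to prove that $u$ must actually become strictly decreasing at some $x_1\ge 0$, say $u'(x_1)<0$ and $u(x_1)>0$, and then to use the tangent-line bound
\[
u(x)\le u(x_1)+u'(x_1)(x-x_1),\qquad x\ge x_1,
\]
guaranteed by concavity, to force $u$ to vanish at some finite
\[
x_*\in\bigl(x_1,\,x_1-u(x_1)/u'(x_1)\bigr].
\]
At this first zero, $u$ attains its global minimum $0$ at an interior point of $[-\kappa,+\infty)$, so $u'(x_*)=0$; but $u''\le 0$ also gives $u'(x_*)\le u'(x_1)<0$, a contradiction.

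What remains is to secure the strict decrease at some $x_1\ge 0$. If $u'(0)<0$ one simply takes $x_1=0$. Otherwise $u'(0)=0$, and I would integrate the equation on a small right neighborhood of $0$ to write
\[
u'(x)=-\int_0^x\bigl[\alpha(s+\kappa)^\gamma u^p(s)+\beta(s)\bigr]\,ds.
\]
By continuity of $u$ at $0$, $u(s)\ge 1/2$ on some $[0,\delta]$, so the integrand is bounded below by $\tfrac{\alpha}{2^p}(s+\kappa)^\gamma$, and $(s+\kappa)^\gamma>0$ for every $s>0$ as soon as $\kappa\ge 0$. Hence $u'(x)<0$ for every sufficiently small $x>0$, and any such $x$ can play the role of $x_1$. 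The main obstacle I expect is precisely this mild degeneracy: when $\kappa=0$ and $\gamma>0$ the weight $(s+\kappa)^\gamma$ vanishes at $s=0$, so the strict inequality $u'(x_1)<0$ cannot be read off by evaluating the equation at the boundary and one genuinely has to pass to an interior point $x_1>0$ via the integrated form. Once this is in place, the rest is pure concavity geometry.
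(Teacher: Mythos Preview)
Your proof is correct and shares the paper's starting observations (concavity $u''\le 0$, the normalization $u(0)=1$ giving $u'(0)\le 0$, and the need to secure a point $x_1\ge 0$ with $u'(x_1)<0$ and $u(x_1)>0$). The execution diverges in how the contradiction is reached. The paper argues by dichotomy: either $u$ vanishes at some first $x_0>0$, in which case $u'(x_0)<u'(0)\le 0$ contradicts $u\ge 0$; or $u>0$ on $[0,+\infty)$, in which case they multiply the inequality $-u''\ge \alpha(1+\kappa)^{\gamma}u^{p}$ by $u'$ and integrate on $[1,x]$ to obtain an energy estimate that ultimately yields the bound $x\le 1+u(1)/(-u'(1))$. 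You bypass this split entirely: the tangent-line inequality $u(x)\le u(x_1)+u'(x_1)(x-x_1)$, immediate from concavity, forces $u$ to vanish in finite time, and then the interior-minimum contradiction (your $u'(x_*)=0$ versus $u'(x_*)\le u'(x_1)<0$) is exactly the mechanism of the paper's first case. In effect your argument shows that the paper's second case can never occur, and---amusingly---the final inequality the paper extracts from its energy estimate, $x-1\le u(1)/(-u'(1))$, is precisely the tangent-line bound at $x_1=1$, reached by a longer route. Your version is therefore more elementary and avoids the energy computation entirely; the paper's derivation would generalize more readily to situations where one needs quantitative decay information, but for the bare non-existence statement your approach is cleaner.
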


\begin{proof}
Let $u\in\mathcal{C}^{2}([-\kappa,+\infty))$ be a non-negative solution of \eqref{ii.1}. As $0\leq u \leq 1$ and $u(0)=1$, necessarily $u'(0)\leq 0$. Moreover, as $u''\leq 0$, a direct integration yields $u'\leq 0$ in $[0,+\infty)$. Thus, two different cases can occur: either $u(x)>0$ for all $x\geq 0$, or there
exists $x_0>0$ such that
\begin{equation}
\label{ii.2}
  u(x)>0 \;\;\hbox{for all}\;\; x\in [0,x_0),\;\;\hbox{and}\;\; u(x)=0\;\; \hbox{for all}\;\; x\geq x_0.
\end{equation}
Suppose that \eqref{ii.2} holds for some $x_0>0$. Then, since
\begin{equation*}
-u''(x)\geq \alpha \, (x+\kappa)^{\gamma} \, u^{p}(x) > 0 \quad \hbox{for all}\;\; x\in (0,x_0),
\end{equation*}
it becomes apparent that $u''(x)<0$ for every $x\in (0,x_0)$. Consequently,
\begin{equation*}
u'(x_0) = u'(0) + \int_{0}^{x_0}u''(x)\,\mathrm{d}x < u'(0)\leq 0
\end{equation*}
and, hence, $u(x_0+\eta)<0$ for sufficiently small $\eta>0$, which contradicts the positivity of $u$.
Therefore,
\begin{equation*}
  u(x)>0 \quad \hbox{for all}\;\; x\in [0,+\infty).
\end{equation*}
Thus, $-u''(x)>0$ for all $x\geq 0$. So,
\begin{equation}
\label{ii.3}
  u'(x)<0\quad \hbox{for all}\;\; x>0.
\end{equation}
Moreover, for every $x\in[1,+\infty)$,
\begin{equation}
\label{ii.4}
-u''(x)\geq \alpha \, (1+\kappa)^{\gamma} \, u^{p}(x).
\end{equation}
Thus, multiplying \eqref{ii.4} by $u'\leq 0$ and integrating in $[1,x]$, we obtain that
\begin{equation*}
\frac{(u'(1))^{2}}{2}-\frac{(u'(x))^{2}}{2}
\leq \alpha \, (1+\kappa)^{\gamma} \left( \dfrac{u^{p+1}(x)}{p+1}
- \dfrac{u^{p+1}(1)}{p+1}\right) \quad \hbox{for all} \;\; x\in[1,+\infty).
\end{equation*}
Equivalently, for every $x\in[1,+\infty)$,
\begin{equation}
\label{new-2}
(u'(x))^{2} \geq (u'(1))^{2}+\frac{2\alpha \, (1+\kappa)^{\gamma}}{p+1}\bigl(u^{p+1}(1)-u^{p+1}(x)\bigr).
\end{equation}
Consequently, by \eqref{ii.3}, we can infer that, for every $x\geq 1$,
\begin{equation*}
\frac{-u'(x)}{\sqrt{(u'(1))^{2}+\frac{2\alpha \, (1+\kappa)^{\gamma}}{p+1}\bigl(u^{p+1}(1)-u^{p+1}(x)\bigr)}}\geq 1.
\end{equation*}
Actually, due to \eqref{ii.3}, a direct integration in $[1,x]$ and the change of variables
$y=u(x)$ yield
\begin{align*}
x-1 &\leq \int_{u(x)}^{u(1)}\frac{\mathrm{d}y}{\sqrt{(u'(1))^{2}+\frac{2\alpha \, (1+\kappa)^{\gamma}}{p+1}\bigl(u^{p+1}(1)-y^{p+1}\bigr)}}
\\ &\leq \int_{0}^{u(1)} \frac{\mathrm{d}y}{-u'(1)}=
\frac{u(1)}{-u'(1)} \quad \hbox{for all}\;\; x\in[1,+\infty),
\end{align*}
which implies
\begin{equation*}
  x\leq 1 +\frac{u(1)}{-u'(1)}.
\end{equation*}
So, giving a contradiction for sufficiently large $x>1$. This ends the proof.
\end{proof}

The next result provides us with lower bounds for the $L^{\infty}$-norm of the non-trivial non-negative solutions of a homotopy problem associated with \eqref{1.1}.

\begin{lemma}\label{leii.2}
For every $\lambda< 0$, there exists $r_{\lambda}>0$ such that every non-negative solution $u\neq 0$ of
\begin{equation}
\label{ii.5}
\begin{cases}
\, -u'' = \vartheta(\lambda u+a(x) u^{p}) \quad \hbox{in}\;\; (0,L),
\\ \, u(0)=u(L)=0,
\end{cases}
\end{equation}
satisfies $\|u\|_{L^{\infty}} > r_{\lambda}$, regardless the value of $\vartheta\in (0,1]$.
Moreover, one can make the choice
\begin{equation*}
r_{\lambda}\coloneqq\left( \frac{-\lambda}{\|a\|_{L^{\infty}}} \right)^{\!\frac{1}{p-1}}.
\end{equation*}
If $\vartheta=0$, then $u=0$ is the unique solution of \eqref{ii.5}.
\end{lemma}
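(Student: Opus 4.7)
The plan is to argue by contradiction via a simple maximum/convexity principle. Suppose there is a non-negative $u \in \mathcal{C}^2([0,L])$, $u \not\equiv 0$, solving \eqref{ii.5} for some $\vartheta \in (0,1]$, yet with $\|u\|_{L^\infty} \leq r_\lambda$. My goal will be to show that the right-hand side of the equation is then pointwise non-positive, forcing $u$ to be convex and hence identically zero.

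First I would observe that, since $u(x) \in [0,r_\lambda]$ on $[0,L]$, one has $u(x)^{p-1} \leq r_\lambda^{p-1} = -\lambda / \|a\|_{L^\infty}$, and therefore $a(x) u(x)^{p-1} \leq \|a\|_{L^\infty} \, u(x)^{p-1} \leq -\lambda$ for every $x \in [0,L]$ (this bound is trivially true where $a(x) \leq 0$, and uses $a(x) \leq \|a\|_{L^\infty}$ where $a(x) > 0$). Consequently,
\begin{equation*}
\lambda u(x) + a(x) u(x)^p \;=\; u(x)\bigl(\lambda + a(x) u(x)^{p-1}\bigr) \;\leq\; u(x)(\lambda - \lambda) \;=\; 0
\end{equation*}
throughout $[0,L]$. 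Multiplying by $\vartheta \geq 0$ gives $-u'' \leq 0$, i.e.\ $u$ is convex on $[0,L]$. Combined with the Dirichlet boundary conditions $u(0)=u(L)=0$, convexity forces $u \leq 0$; together with $u \geq 0$ this yields $u \equiv 0$, contradicting our assumption. Hence every non-trivial non-negative solution must satisfy the strict inequality $\|u\|_{L^\infty} > r_\lambda$.

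For the $\vartheta = 0$ clause, the equation collapses to $-u'' = 0$ with $u(0) = u(L) = 0$, whose unique solution is $u \equiv 0$. I expect no real obstacle here: the entire argument hinges on the elementary fact that the threshold $r_\lambda$ is chosen precisely so that $\lambda + a(x)\xi^{p-1} \leq 0$ whenever $0 \leq \xi \leq r_\lambda$, and the rest is Dirichlet convexity. The one subtle point worth stating carefully in writing is the reduction of the sign-changing weight to $\|a\|_{L^\infty}$, which works uniformly because we only need an upper bound on $a(x) u(x)^{p-1}$ (not on $|a(x)| u(x)^{p-1}$).
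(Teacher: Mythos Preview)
Your argument is correct and essentially equivalent to the paper's, though the framing differs slightly. The paper argues directly: it evaluates the equation at an interior maximum $x_0$, uses $u''(x_0)\le 0$ to obtain $\lambda + a(x_0)\|u\|_{L^\infty}^{p-1}\ge 0$, and concludes $\|u\|_{L^\infty}\ge r_\lambda$. You instead argue by contradiction, turning the same algebraic observation (that $\lambda + a(x)\xi^{p-1}\le 0$ whenever $0\le \xi\le r_\lambda$) into a global convexity statement and then invoking the Dirichlet conditions. Both proofs are one-step maximum-principle arguments resting on the identical threshold computation; your version has the minor advantage of delivering the \emph{strict} inequality $\|u\|_{L^\infty}>r_\lambda$ asserted in the lemma, whereas the paper's pointwise argument, as written, yields only $\|u\|_{L^\infty}\ge r_\lambda$.
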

\begin{proof}
Let $u\neq 0$ be a non-negative solution of \eqref{ii.5} for some $\vartheta\in (0,1]$, and let $x_{0}\in (0,L)$ be such that
\begin{equation*}
   u(x_{0})=\|u\|_{L^{\infty}}>0.
\end{equation*}
Then, $u''(x_{0})\leq 0$ and hence,
\begin{equation*}
0\leq -u''(x_{0})
= \vartheta \bigl(\lambda \|u\|_{L^{\infty}}+a(x_{0})\|u\|_{L^{\infty}}^{p}\bigr)
= \vartheta \|u\|_{L^{\infty}} \bigl(\lambda+a(x_{0})\|u\|_{L^{\infty}}^{p-1}\bigr).
\end{equation*}
Thus,
\begin{equation*}
\lambda +a(x_{0})\|u\|_{L^{\infty}}^{p-1}\geq 0.
\end{equation*}
Consequently, for every $\lambda<0$, we find that
\begin{equation*}
a(x_{0})\|u\|_{L^{\infty}}^{p-1}\geq -\lambda > 0.
\end{equation*}
So, $a(x_0)>0$, which entails $x_{0}\in I^{+}_{i}$ for some $i\in \{1,\cdots,n\}$. Therefore,
\begin{equation*}
\|u\|_{L^{\infty}}\geq \left( \frac{-\lambda}{a(x_{0})} \right)^{\!\frac{1}{p-1}}\geq \left( \frac{-\lambda}{\|a\|_{L^{\infty}}} \right)^{\!\frac{1}{p-1}}\equiv r_\lambda.
\end{equation*}
This concludes the proof.
\end{proof}

Next, we will provide with a priori bounds for the $L^{\infty}$-norm of the non-negative solutions of another  homotopy problem associated with \eqref{1.1}.

\begin{lemma}\label{leii.3}
Let $\mathcal{I}\subseteq \{1,\ldots,n\}$ be an arbitrary non-empty subset of indices, and let $w\in\mathcal{C}([0,L])$ be such that
\begin{equation}\label{ii.6}
w(x)>0\;\;\hbox{for all} \;\; x\in \bigcup_{i\in\mathcal{I}}I^{+}_{i},\qquad w\equiv 0 \;\;\hbox{in}\;\;
 [0,L]\setminus \bigcup_{i\in\mathcal{I}}\overline{I^{+}_{i}},
\end{equation}
and, for every $i\in\mathcal{I}$, there exists a function $q_{i} \colon \overline{I_{i}^{+}}\to\mathbb{R}$ continuous and bounded away from zero in a neighborhood of $\partial I^{+}_{i} = \{\sigma_{i},\tau_{i}\}$
for which
\begin{equation*}
w(x) = q_{i}(x) [\mathrm{dist}(x,\partial I^{+}_{i})]^{\gamma_i} \quad \hbox{for all}\;\; x\in I^{+}_{i},
\end{equation*}
where $\gamma_i>0$ is the constant of condition \ref{hp-G} (thanks to \eqref{ii.6}, we can take $q_i=0$
if $i\notin\mathcal{I}$). Then, for every $\lambda<0$, there exists a constant $R_{\lambda}>0$ such that:
\begin{enumerate}
\item[{\rm (i)}] every non-negative solution $u$ of
\begin{equation}\label{ii.7}
\begin{cases}
\, -u''=\lambda u+a(x) u^{p}+\mu w \quad \hbox{in}\;\; (0,L), \\
\, u(0)=u(L)=0,
\end{cases}
\end{equation}
satisfies $\|u\|_{L^{\infty}}<R_{\lambda}$, regardless the value of $\mu\geq 0$;
\item[{\rm (ii)}] there exists $\mu_*=\mu_{*}(\lambda)>0$ such that \eqref{ii.7} does not
admit a non-negative solution $u\neq 0$ if $\mu\geq \mu_*$.
\end{enumerate}
\end{lemma}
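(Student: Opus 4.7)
The plan is to establish (i) via a blow-up argument that reduces to the Liouville-type Lemma~\ref{leii.1}, and to deduce (ii) directly from (i) through a linear testing identity. Since this identity also controls the $\mu w$ forcing produced inside the blow-up, I derive it first.

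\textbf{Testing identity.} Pick any $j_{0}\in\mathcal{I}$ and let $\varphi>0$ be the first Dirichlet eigenfunction of $-D^{2}$ on $I^{+}_{j_{0}}$, with eigenvalue $\lambda_{1}(I^{+}_{j_{0}})>0$, extended by zero to $[0,L]$. Multiplying \eqref{ii.7} by $\varphi$ and integrating by parts twice on $I^{+}_{j_{0}}$, the boundary contributions $u(\sigma_{j_{0}})\varphi'(\sigma_{j_{0}})\ge 0$ and $-u(\tau_{j_{0}})\varphi'(\tau_{j_{0}})\ge 0$ (since $\varphi>0$ inside and vanishes at $\partial I^{+}_{j_{0}}$), together with $\int_{I^{+}_{j_{0}}} a\,u^{p}\varphi\ge 0$ (because $a\ge 0$ there), yield after rearrangement
\[
\mu\int_{I^{+}_{j_{0}}} w\,\varphi\;\le\;(\lambda_{1}(I^{+}_{j_{0}})-\lambda)\int_{I^{+}_{j_{0}}} u\,\varphi.
\]
Since $\int_{I^{+}_{j_{0}}} w\varphi>0$ by $j_{0}\in\mathcal{I}$, this gives the linear bound $\mu\le C_{0}\|u\|_{L^{\infty}}$ with $C_{0}:=(\lambda_{1}(I^{+}_{j_{0}})-\lambda)\|\varphi\|_{L^{1}}/\int_{I^{+}_{j_{0}}} w\varphi$.

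\textbf{Proof of (i) by blow-up.} Suppose, for a contradiction, that there exist non-negative solutions $u_{n}$ of \eqref{ii.7} for $\mu_{n}\ge 0$ with $M_{n}:=\|u_{n}\|_{L^{\infty}}\to+\infty$. The identity above yields $\mu_{n}\le C_{0}M_{n}$, hence $\mu_{n}/M_{n}^{p}\to 0$ as $p>1$. Let $x_{n}\in(0,L)$ be a maximizer of $u_{n}$; the inequality $0\le -u_{n}''(x_{n})=\lambda M_{n}+a(x_{n})M_{n}^{p}+\mu_{n}w(x_{n})$ rules out $x_{n}\in I^{-}_{i}$ (there $a<0$, $w=0$, so the right-hand side is strictly negative for large $M_{n}$) and $x_{n}\in\{\sigma_{j},\tau_{j}\}$ (there $a=w=0$, leaving only $\lambda M_{n}<0$). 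Thus, passing to a subsequence, $x_{n}\in I^{+}_{j}$ and $x_{n}\to x_{\infty}\in\overline{I^{+}_{j}}$ for a fixed $j$. I then set $v_{n}(y):=u_{n}(x_{n}+\epsilon_{n}y)/M_{n}$ with a scale $\epsilon_{n}\to 0^{+}$ chosen in terms of $d_{n}:=\mathrm{dist}(x_{n},\partial I^{+}_{j})$. When $d_{n}/\epsilon_{n}\to+\infty$ (in particular if $x_{\infty}\in I^{+}_{j}$), the standard choice $\epsilon_{n}=(a(x_{n})M_{n}^{p-1})^{-1/2}$ produces, by elliptic $C^{2}_{\mathrm{loc}}$-compactness, a limit $v\ge 0$ on $\mathbb{R}$ with $-v''=v^{p}$, $v(0)=1$, $0\le v\le 1$. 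In the complementary regime, condition \ref{hp-G} forces the degenerate scaling $\epsilon_{n}=M_{n}^{-(p-1)/(\gamma_{j}+2)}$, giving the half-line limit $-v''=\rho_{j}(\sigma_{j})(y+\kappa)^{\gamma_{j}}v^{p}$ on $[-\kappa,+\infty)$ with $\kappa:=\lim d_{n}/\epsilon_{n}\in[0,+\infty)$. Either limit contradicts Lemma~\ref{leii.1}. The residual forcing $\mu_{n}(\epsilon_{n}^{2}/M_{n})w(x_{n}+\epsilon_{n}y)$ vanishes uniformly on compacts: the matching vanishing rate of $w$ at $\partial I^{+}_{j}$ provides an extra factor $\epsilon_{n}^{\gamma_{j}}$, which together with $\mu_{n}\le C_{0}M_{n}$ produces a contribution of size $M_{n}^{1-p}\to 0$.

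\textbf{Proof of (ii) and main obstacle.} From (i), every non-negative solution of \eqref{ii.7} satisfies $\|u\|_{L^{\infty}}<R_{\lambda}$, so the testing identity forces $\mu<C_{0}R_{\lambda}$. Setting $\mu_{*}(\lambda):=C_{0}R_{\lambda}+1$, no non-trivial non-negative solution exists for $\mu\ge\mu_{*}$; and $u\equiv 0$ solves \eqref{ii.7} only when $\mu w\equiv 0$, which fails for $\mu>0$ because $w>0$ on $\bigcup_{i\in\mathcal{I}} I^{+}_{i}$. I expect the genuinely delicate step to be the boundary blow-up: choosing the scaling dictated by \ref{hp-G} and verifying that both $a(x_{n}+\epsilon_{n}y)\,\epsilon_{n}^{2}M_{n}^{p-1}$ and the $\mu_{n}w$-contribution have compatible limits requires the full strength of the matching growth hypothesis imposed on $w$.
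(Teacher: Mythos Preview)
Your approach is correct and follows the same blow-up/Liouville scheme as the paper, but with one genuine improvement worth recording. The paper tests \eqref{ii.7} against the principal eigenfunction on the whole interval $(0,L)$, which only gives $\mu_n\le C\,M_n^p$; consequently the forcing term survives in the limit, and Lemma~\ref{leii.1} must absorb a non-trivial $\beta\ge 0$. By testing instead on a single $I^{+}_{j_0}$ with $j_0\in\mathcal{I}$, you exploit $a\ge 0$ there to drop the $a u^p\varphi$ term and obtain the sharper linear bound $\mu_n\le C_0 M_n$, so $\mu_n/M_n^p\to 0$ and the blow-up limits are homogeneous; this also makes (ii) immediate. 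Your organization of the blow-up (the dichotomy $d_n/\epsilon_n\to\infty$ versus bounded, with the standard scaling $\epsilon_n=(a(x_n)M_n^{p-1})^{-1/2}$ in the first regime and the degenerate scaling $\epsilon_n=M_n^{-(p-1)/(\gamma_j+2)}$ in the second) is equivalent to the paper's interior/boundary split followed by the three sub-cases $\kappa=0$, $\kappa\in(0,\infty)$, $\kappa=+\infty$: your first regime absorbs the paper's $\kappa=+\infty$ case directly, avoiding the paper's secondary rescaling by $\kappa_n^{\gamma/2}$. The only point you leave implicit is the $C^{1}_{\mathrm{loc}}$-compactness of $v_n$ (one needs a two-sided bound on $v_n'(0)$, which follows from $0\le v_n\le 1$ on a fixed interval together with the $v_n''$-bound), and the symmetric treatment when $x_n\to\tau_j$ rather than $\sigma_j$; both are routine.
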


\begin{proof}
Let $\mathcal{I}\subseteq \{1,\ldots,n\}$ with $\mathcal{I}\neq\emptyset$, and pick $\lambda<0$ and $w$ be as in the statement. Suppose, by contradiction, that (i) is false.
Then, there are two sequences $\{\mu_{n}\}_{n\in\mathbb{N}}\subset [0,+\infty)$ and $\{u_{n}\}_{n\in\mathbb{N}}\subset \mathcal{C}^{2}([0,L])$, with $u_n\gneq 0$, such that, for every $n\in\mathbb{N}$,
\begin{equation}
\label{ii.8}
\begin{cases}
\, -u_{n}''=\lambda u_{n}+a(x) u_{n}^{p}+\mu_{n} w \quad \hbox{in}\;\; (0,L),
\\ \, u_{n}(0)=u_{n}(L)=0,
\end{cases}
\end{equation}
and, for some sequence $\{x_{n}\}_{n\in\mathbb{N}}\subset (0,L)$,
\begin{equation}
\label{ii.9}
0<u_{n}(x_{n})=\|u_{n}\|_{L^{\infty}}=:M_{n}\to +\infty \quad \hbox{as}\;\; n\to+\infty.
\end{equation}
Evaluating the equation in \eqref{ii.8} at the maximum point $x=x_{n}$, we find from \eqref{ii.9} that, for every  $n\in\mathbb{N}$,
\begin{equation}
\label{ii.10}
\lambda M_{n}+a(x_{n})M_{n}^{p} + \mu_{n} w(x_{n}) = - u_{n}''(x_{n}) \geq 0.
\end{equation}
Suppose that $a(x_n)\leq 0$ for some $n\in\mathbb{N}$. Then,  as $\lambda<0$ and, due to \eqref{ii.6},  $w(x_n)=0$, we have that
\begin{equation*}
   \lambda M_{n}+a(x_n)M_{n}^{p} < 0,
\end{equation*}
which contradicts \eqref{ii.10}. Therefore, $a(x_{n})>0$ for all $n\in\mathbb{N}$. Equivalently, by \ref{hp-Ha},
\begin{equation*}
\{x_{n}\}_{n\in \mathbb{N}}\subset \bigcup_{i=1}^{n}I^{+}_{i}.
\end{equation*}
By compactness, there exists an index $j_{0}\in\{1,2,\ldots,n\}$ such that
\begin{equation*}
   \lim_{n\to +\infty} x_{n}= x_{0} \;\; \hbox{for some}\;\; x_{0}\in\overline{I^{+}_{j_{0}}}=
   [\sigma_{j_0},\tau_{j_0}] \eqqcolon [\alpha,\beta].
\end{equation*}
Moreover, due to \ref{hp-Ha}, by choosing an appropriate subsequence, we can suppose that
\begin{equation*}
   x_n \in I_{j_{0}}^{+} \quad \hbox{for all}\;\; n\in\mathbb{N}.
\end{equation*}
Subsequently, we will distinguish two different cases according to the location of $x_0$
in the compact interval $[\alpha,\beta]$.

\medskip

\noindent
\textit{Case 1.} Suppose that $x_{0}\in(\alpha,\beta)$. Then, for every $n\in\mathbb{N}$, we consider  the scaling functions
\begin{equation}\label{ii.11}
v_{n}\left(\nu_{n}^{-1}(x-x_{n})\right):=\nu_{n}^{\frac{2}{p-1}}u_{n}(x) \;\;\hbox{for all}\;\; x\in [\alpha,\beta], \quad \hbox{where}\;\; \nu_{n}\coloneqq M_{n}^{\frac{1-p}{2}}.
\end{equation}
Observe that, thanks to \eqref{ii.9} and \eqref{ii.11},
\begin{equation}
\label{ii.12}
\lim_{n\to +\infty}\nu_{n}=0,
\end{equation}
because $p>1$, and that, particularizing \eqref{ii.11} at $x=x_n$, \eqref{ii.9} implies that
\begin{equation*}
v_{n}(0)=M_{n}^{-1}u_{n}(x_{n})=1 \quad \hbox{for all}\;\; n\in\mathbb{N}.
\end{equation*}
According to \eqref{ii.11}, for every $n\in\mathbb{N}$, the function $v_{n}$ is well defined in the interval \begin{equation}\label{ii.13}
J_{n}\coloneqq \Bigl[M_{n}^{\frac{p-1}{2}}(\alpha-x_{n}), M_{n}^{\frac{p-1}{2}}(\beta-x_{n})\Bigr],
\end{equation}
and $v_n$ can be equivalently defined by
\begin{equation*}
v_{n}(y)\coloneqq\nu_{n}^{\frac{2}{p-1}}u_{n}(x_{n}+\nu_{n} y) \quad \hbox{for all}\;\; y\in J_{n}.
\end{equation*}
By construction,
\begin{equation*}
\|v_n\|_{L^\infty(J_n)}=v_n(0)=1\quad \hbox{for all}\;\;n\in \mathbb{N}.
\end{equation*}
After some straightforward manipulations, it is easily seen that $v_n \in \mathcal{C}^{2}(J_n)$ and that
\begin{equation*}
-v_{n}''(y)=\lambda\nu_{n}^{2}v_{n}(y)+a(x_{n}+\nu_{n}y)v_{n}^{p}(y)+\nu_{n}^{\frac{2p}{p-1}}
\mu_{n}w(x_{n}+\nu_{n}y) \quad \hbox{for all}\;\; y\in J_{n}.
\end{equation*}
Next, we will estimate the limit
\begin{equation*}
\lim_{n\to+\infty} \Big( \nu_{n}^{\frac{2p}{p-1}}\mu_{n}\Big) =
\lim_{n\to+\infty}\frac{\mu_{n}}{M_{n}^{p}}.
\end{equation*}
Let $\varphi(x)\coloneqq \sin \frac{\pi x}{L}$ be the unique positive eigenfunction of the eigenvalue problem
\begin{equation}\label{ii.14}
\begin{cases}
\, -\varphi''=\Sigma_{1} \varphi \quad \hbox{in}\;\; (0,L),
\\ \, \varphi(0)=\varphi(L)=0,
\end{cases}
\end{equation}
with $\|\varphi\|_{L^{\infty}}=1$, where
\begin{equation*}
   \Sigma_1=\left(\frac{\pi}{L}\right)^{\!2}.
\end{equation*}
Then, multiplying the differential equation of \eqref{ii.8} by $\varphi$ and integrating by parts in $(0,L)$, we find that
\begin{align*}
\mu_{n} \int_{0}^{L}w\varphi & = \int_{0}^{L}\bigl[(\Sigma_{1}-\lambda)u_{n}\varphi-a(x)u_{n}^{p}\varphi\bigr]
\\ & \leq (\Sigma_{1}-\lambda)LM_{n} +\|a\|_{L^{\infty}}LM_{n}^{p}.
\end{align*}
Thus, for every $n\in\mathbb{N}$,
\begin{equation*}
\mu_{n}\leq \frac{(\Sigma_{1}-\lambda)LM_{n} +\|a\|_{L^{\infty}}LM_{n}^{p}}{\int_{0}^{L}w\varphi}.
\end{equation*}
Therefore, owing to \eqref{ii.9}, it becomes apparent that
\begin{equation}\label{ii.15}
\rho\coloneqq \limsup_{n\to+\infty}\frac{\mu_{n}}{M_{n}^{p}}\leq \frac{L \|a\|_{L^{\infty}}}{\int_{0}^{L}w\varphi}.
\end{equation}
Actually, by choosing an appropriate subsequence, we can assume that
\begin{equation}
\label{ii.16}
\lim_{n\to+\infty}\Big( \nu_{n}^{\frac{2p}{p-1}}\mu_{n}\Big) = \lim_{n\to+\infty}\frac{\mu_{n}}{M_{n}^{p}}=\rho.
\end{equation}
As $x_{0}\in(\alpha,\beta)$, it follows from \eqref{ii.9} and \eqref{ii.13} that $J_{n}\uparrow \mathbb{R}$ as $n\to+\infty$.  Thus, for every $R>0$, there is ${n}_{R}\in\mathbb{N}$ such that $[-R,R]\subseteq J_{n}$ for all $n\geq {n}_{R}$. Moreover, $v_{n}\in \mathcal{C}^{2}([ -R,R])$ satisfies
\begin{equation}\label{ii.17}
-v_{n}''(y)=\lambda\nu_{n}^{2}v_{n}(y)+a(x_{n}+\nu_{n}y)v_{n}^{p}(y)+
\nu_{n}^{\frac{2p}{p-1}}\mu_{n}w(x_{n}+\nu_{n}y), \quad y\in [-R,R].
\end{equation}
As $0\leq v_{n}\leq 1$, it follows from \eqref{ii.12}, \eqref{ii.16}, and \eqref{ii.17} that, for every  $\varepsilon>0$, there exists $n_0=n_0(\varepsilon)\in\mathbb{N}$ such that
\begin{align*}
\|v_{n}''\|_{L^\infty([-R,R])}
&\leq |\lambda|\nu_{n}^{2} \|v_{n}\|_{L^{\infty}}+\|a\|_{L^{\infty}} \|v_{n}\|_{L^{\infty}}^{p}+\nu_{n}^{\frac{2p}{p-1}}\mu_{n} \|w\|_{L^{\infty}}
\\ &\leq |\lambda|+\|a\|_{L^{\infty}}+(\rho+\varepsilon) \|w\|_{L^{\infty}}
\end{align*}
for all $n\geq n_0$. Consequently, since $v_n(0)=1$ and, hence, $v_n'(0) \leq 0$ for all $n\in\mathbb{N}$, we also have that
\begin{equation*}
 v'_n(y)=v'_n(0)+\int_0^y v''_n(s)\,\mathrm{d}s \leq \int_0^y v''_n(s)\,\mathrm{d}s
\quad \hbox{for all} \;\; y \in [-R,R].
\end{equation*}
Thus,
\begin{equation*}
\|v_n'\|_{L^\infty([-R,R])}\leq R \|v_n''\|_{L^\infty([-R,R])}
\leq R \bigl[ |\lambda|+\|a\|_{L^{\infty}}+(\rho+\varepsilon) \|w\|_{L^{\infty}} \bigr].
\end{equation*}
These estimates show that $\{v_n\}_{n\geq 1}$ is bounded in $\mathcal{C}^2([-R,R])$. Thus, by the Ascoli--Arzel\`a Theorem, we can extract a subsequence, labeled again by $n$, such that, for some $v\in \mathcal{C}^1([-R,R])$,
\begin{equation*}
\lim_{n\to +\infty}\|v_n-v\|_{\mathcal{C}^1([-R,R])}=0.
\end{equation*}
Actually, letting $n\to +\infty$ in \eqref{ii.17}, we also find that $v\in \mathcal{C}^2([-R,R])$
and that
\begin{equation}\label{ii.18}
-v''= a(x_0)v^p +\rho w(x_0)\quad \hbox{in}\; \; [-R,R].
\end{equation}
By a diagonal process based on a sequence of values of $R$, $\{R_n\}_{n\geq 1}$, such that $R_n\to+\infty$ as $n\to+\infty$, it becomes apparent that $v$ can be prolonged to $\mathbb{R}$ still satisfying \eqref{ii.18}. Therefore, $v\in\mathcal{C}^2(\mathbb{R})$ and it satisfies
\begin{equation*}
\begin{cases}
\, -v''=a(x_{0})v^{p}+\rho w(x_{0})\quad \hbox{in}\;\; \mathbb{R},
\\ \, v(0)=1, \quad 0 \leq v \leq 1,
\end{cases}
\end{equation*}
which contradicts Lemma~\ref{leii.1}, by choosing  $\alpha=a(x_0)>0$, $\gamma=0$ and $\beta=0$, and ends the proof in this case.

\medskip

\noindent
\textit{Case 2.}
Suppose that $x_{0}\in\{\alpha,\beta\}$. Without loss of generality, we can assume that $x_{0}=\alpha$,
as the proof in the case $x_0=\beta$ is analogous. In this case, we set $\gamma \coloneqq \gamma_{j_0}$, as defined in \ref{hp-G}, and, for every $n\in\mathbb{N}$, we consider the scaling function
\begin{equation}
\label{ii.19}
v_{n}\left(\nu_{n}^{-1}(x-x_{n})\right)=\nu_{n}^{\frac{2+\gamma}{p-1}}u_{n}(x)
\quad \hbox{for all}\;\; x\in [\alpha,\beta],\quad \hbox{where} \;\; \nu_{n}\coloneqq M_{n}^{\frac{1-p}{2+\gamma}}.
\end{equation}
Observe that
\begin{equation*}
\lim_{n\to +\infty}\nu_{n}=0, \qquad v_{n}(0)=M_{n}^{-1}u_{n}(x_{n})=1 \quad \hbox{for all}\;\; n\in\mathbb{N}.
\end{equation*}
According to \eqref{ii.19}, for every $n$, the function $v_{n}$ is well defined in the interval \begin{equation*}
\tilde J_{n}\coloneqq \Bigl[M_{n}^{\frac{p-1}{2+\gamma}}(\alpha-x_{n}), M_{n}^{\frac{p-1}{2+\gamma}}(\beta-x_{n})\Bigr].
\end{equation*}
Moreover,
\begin{equation}
\label{ii.20}
v_{n}(y)\coloneqq\nu_{n}^{\frac{2+\gamma}{p-1}}u_{n}(x_{n}+\nu_{n} y) \quad \hbox{for all}\;\; y\in \tilde J_{n}.
\end{equation}
Differentiating \eqref{ii.20} with respect to $y$ and substituting the result in the differential equation of
\eqref{ii.8}, after some straightforward manipulations, it follows from \ref{hp-G} that $v_n \in \mathcal{C}^{2}(\tilde J_n)$ and that it satisfies
\begin{align*}
-v_{n}''(y) =\lambda\nu_{n}^{2}v_{n}(y) & +\nu_{n}^{-\gamma} \rho_{j_{0}}(x_{n}  +\nu_{n}y)\bigl[\mathrm{dist}(x_{n}+\nu_{n}y,\partial I^{+}_{j_{0}})\bigr]^{\gamma} v_{n}^{p}(y) \\ & +\nu_{n}^{\frac{2p+\gamma}{p-1}}
\mu_{n}q_{j_{0}}(x_{n}+\nu_{n}y)\bigl[\mathrm{dist}(x_{n}+\nu_{n}y,\partial I^{+}_{j_{0}})\bigr]^{\gamma}
\end{align*}
for all $y\in \tilde J_{n}$. It should be remembered that $q_{j_{0}} \equiv 0$ if $j_0\notin \mathcal{I}$.
\par
Up to a subsequence, we can assume that $x_{n} \in [\alpha, \tfrac{\alpha+\beta}{2}]$ for all $n\in\mathbb{N}$. Moreover, by definition of the distance function,
\begin{equation*}
\mathrm{dist}(x,\partial I^{+}_{j_{0}}) =
\begin{cases}
\, x-\alpha & \hbox{if}\;\; x\in [\alpha, \tfrac{\alpha+\beta}{2}],
\\ \, \beta -x & \hbox{if} \;\; x\in [\tfrac{\alpha+\beta}{2},\beta].
\end{cases}
\end{equation*}
Thus,  for every $n\in\mathbb{N}$ and $y\in\tilde J_n$, we have that
\begin{align*}
-v_{n}''(y)
  =\lambda\nu_{n}^{2}v_{n}(y) & + \rho_{j_{0}}(x_{n}+\nu_{n}y)\bigl[M_{n}^{\frac{p-1}{2+\gamma}}(x_{n}-\alpha)+y\bigr]^{\gamma} v_{n}^{p}(y)
\\ & +\nu_{n}^{\frac{p(\gamma+2)}{p-1}}
\mu_{n} q_{j_{0}}(x_{n}+\nu_{n}y) \bigl[M_{n}^{\frac{p-1}{2+\gamma}}(x_{n}-\alpha)+y\bigr]^{\gamma}.
\end{align*}
Repeating the proof of \eqref{ii.15}, it becomes apparent that, also in this case,
\begin{equation*}
\rho
\coloneqq
\limsup_{n\to+\infty} \Big( \nu_{n}^{\frac{p(\gamma+2)}{p-1}} \mu_{n}\Big)
= \limsup_{n\to+\infty} \frac{\mu_{n}}{M_{n}^{p}}\leq \frac{L \|a\|_{L^{\infty}}}{\int_{0}^{L}w\varphi}.
\end{equation*}
Subsequently, we divide the proof into three parts according to the behavior of the limit
\begin{equation*}
\kappa \equiv \limsup_{n\to+\infty}\Big( M_{n}^{\frac{p-1}{2+\gamma}}(x_{n}-\alpha)\Big) \geq 0.
\end{equation*}
Suppose $\kappa = 0$. Then, $\tilde J_{n}$ approximates $[0,+\infty)$ as $n\to+\infty$. Moreover,
adapting the argument of the proof of Case 1, one can infer the existence of $v\in \mathcal{C}^{2}([0,+\infty))$ such that
\begin{equation*}
\begin{cases}
\, -v''(y)= \rho_{j_{0}}(\alpha)y^{\gamma}v^{p}(y)+\rho q_{j_{0}}(\alpha) y^{\gamma} \quad \hbox{for all} \;\; y\in [0,+\infty), \\ \, v(0)=1, \quad 0 \leq v \leq 1.
\end{cases}
\end{equation*}
According to Lemma~\ref{leii.1}, this is not possible. Therefore, $\kappa \in (0,+\infty]$.
\par
Suppose that  $\kappa = +\infty$. Then, $\tilde J_{n}$ approximates $\mathbb{R}$ as $n\to+\infty$. In this case, we perform the change of variable
\begin{equation*}
v_{n}(y):=w_{n}\bigl(\kappa_{n}^{\frac{\gamma}{2}}y\bigr),
\quad \hbox{where}\;\; \kappa_{n}\coloneqq M_{n}^{\frac{p-1}{2+\gamma}}(x_{n}-\alpha)\to \kappa=+\infty
 \;\;  \hbox{as}\;\; n\to +\infty.
\end{equation*}
Then, setting $z:= \kappa_{n}^{\frac{\gamma}{2}}y$, we have that
\begin{equation*}
  w_n(z)= v_n( \kappa_{n}^{-\frac{\gamma}{2}}y)
\end{equation*}
and $w_{n}$ satisfies the differential equation
\begin{align*}
-w_{n}''(z)  = \lambda \nu_{n}^{2} \kappa_{n}^{-\gamma} w_{n}(z) & + \rho_{j_{0}}(x_{n}+\nu_{n}\kappa_{n}^{-\frac{\gamma}{2}}z)\kappa_{n}^{-\gamma}(\kappa_{n}^{-\frac{\gamma}{2}}z
+\kappa_{n})^{\gamma} w_{n}^{p}(z) \\ &+\nu_{n}^{\frac{p(\gamma+2)}{p-1}}
\mu_{n}q_{j_{0}}(x_{n}+\nu_{n}\kappa_{n}^{-\frac{\gamma}{2}}z)
\kappa_{n}^{-\gamma}(\kappa_{n}^{-\frac{\gamma}{2}}z+\kappa_{n})^{\gamma}.
\end{align*}
Letting $n\to+\infty$ and arguing as in the previous case, one can infer the existence of $w\in \mathcal{C}^{2}(\mathbb{R})$ such that
\begin{equation*}
\begin{cases}
\, -w''= \rho_{j_{0}}(\alpha)w^{p}+\rho\, q_{j_{0}}(\alpha) \quad \hbox{in}\;\; \mathbb{R},
\\ \, w(0)=1, \quad 0 \leq w \leq 1.
\end{cases}
\end{equation*}
As, according to Lemma~\ref{leii.1}, such an $w$ cannot exist, we can infer that $\kappa \in (0,+\infty)$.
Thus, $\tilde J_{n}\uparrow [-\kappa,+\infty)$ as $n\to+\infty$, and, arguing as in the first case, there
 exists $v\in \mathcal{C}^{2}([-\kappa,\infty))$ such that
\begin{equation*}
\begin{cases}
\, -v''= \rho_{j_{0}}(\alpha)(y+\kappa)^{\gamma}v^{p}+\rho\, q_{j_{0}}(\alpha) (y+\kappa)^{\gamma}
\quad \hbox{in}\;\; [-\kappa, +\infty), \\ \, v(0)=1, \quad 0\leq v \leq 1.
\end{cases}
\end{equation*}
As, once again, Lemma~\ref{leii.1} guarantees that such an $v$ cannot exist,
this contradiction concludes the proof of Part (i).
\par
To prove Part (ii), let $\varphi(x)\coloneqq \sin \frac{\pi x}{L}$ be the unique positive eigenfunction of the eigenvalue problem \eqref{ii.14} such that $\|\varphi\|_{L^{\infty}}=1$. Then, multiplying the differential equation of \eqref{ii.7} by $\varphi$ and integrating by parts, we find that, for every positive solution
of \eqref{ii.7},
\begin{align*}
\mu \int_{0}^{L}w\varphi & = \int_{0}^{L}\left[(\Sigma_{1}-\lambda)u\varphi-a(x)u^{p}\varphi\right]
\\ & \leq \left[(\Sigma_{1}-\lambda) +\|a\|_{L^{\infty}}\|u\|_{L^{\infty}}^{p-1}\right] L \|u\|_{L^{\infty}}.
\end{align*}
Thus, according to Part (i), it is apparent that
\begin{equation*}
\mu < \mu_{*}(\lambda) :=  \frac{\left[(\Sigma_{1}-\lambda)+\|a\|_{L^{\infty}} R_{\lambda}^{p-1}\right] L R_{\lambda}}{\int_{0}^{L}w\varphi}.
\end{equation*}
Therefore, \eqref{ii.7} cannot admit non-negative solutions if $\mu\geq \mu_{*}(\lambda)$. This ends the proof.
\end{proof}

The next result provides us with the behavior of the non-negative solution of \eqref{1.1} in the intervals $I^{-}_{i}$ as $\lambda\downarrow-\infty$. It is Theorem~3.1 of Fencl and L\'{o}pez-G\'{o}mez \cite{FeLG-22}.

\begin{lemma}\label{leii.4}
For any given $i\in\{1,\ldots,n\}$ and $\delta>0$, and every compact subinterval $K\subseteq I^{-}_{i}$, there exists $\tilde{\lambda}_{i}<0$ such that
\begin{equation*}
\|u\|_{L^{\infty}(K)}<\delta
\end{equation*}
for all non-negative solution $u$ of \eqref{1.1} with $\lambda<\tilde{\lambda}_{i}$.
\end{lemma}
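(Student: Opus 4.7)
The strategy combines an exponential-barrier comparison inside $I_{i}^{-}$ with a polynomial a priori bound on $\|u\|_{L^{\infty}}$ in terms of $|\lambda|$. Write $I_{i}^{-}=(\alpha,\beta)$ with $\alpha\coloneqq\tau_{i}$ and $\beta\coloneqq\sigma_{i+1}$, so that $a\leq 0$ on $\overline{I_{i}^{-}}$. For every non-negative solution $u$ of \eqref{1.1} with $\lambda<0$, the equation rewrites as $u''=|\lambda|u+|a(x)|u^{p}\geq |\lambda|u$ on $(\alpha,\beta)$. Given $x_{*}\in(\alpha,\beta)$, the natural barrier is
\begin{equation*}
\psi(x)\coloneqq A\cosh\bigl(\sqrt{|\lambda|}\,(x-x_{*})\bigr),\qquad A>0,
\end{equation*}
which satisfies $-\psi''=\lambda\psi$ and, since $-a(x)\psi^{p}\geq 0$, is a supersolution of $-v''=\lambda v+a(x)v^{p}$ on $(\alpha,\beta)$.

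A standard interior-maximum argument shows that if $u-\psi$ attained a positive maximum at some $x_{0}\in(\alpha,\beta)$, one would obtain
\begin{equation*}
0\leq -(u-\psi)''(x_{0})\leq \lambda(u-\psi)(x_{0})+a(x_{0})\bigl(u^{p}-\psi^{p}\bigr)(x_{0})<0,
\end{equation*}
using $\lambda<0$, $u(x_{0})>\psi(x_{0})>0$ and $a(x_{0})\leq 0$, a contradiction. Hence $\psi\geq u$ on $[\alpha,\beta]$ whenever $\psi\geq u$ at $\alpha$ and $\beta$. Setting $d_{*}\coloneqq\min(|x_{*}-\alpha|,|x_{*}-\beta|)$ and $A\coloneqq \|u\|_{L^{\infty}}/\cosh(\sqrt{|\lambda|}\,d_{*})$ enforces this boundary inequality, so $\cosh t\geq \tfrac{1}{2}e^{t}$ yields
\begin{equation*}
u(x_{*})\leq \psi(x_{*})=A\leq 2\,\|u\|_{L^{\infty}}\,e^{-\sqrt{|\lambda|}\,d_{*}}.
\end{equation*}
For any compact $K\subseteq I_{i}^{-}$ and $d\coloneqq\mathrm{dist}(K,\partial I_{i}^{-})>0$, this gives $\|u\|_{L^{\infty}(K)}\leq 2\|u\|_{L^{\infty}}\,e^{-\sqrt{|\lambda|}\,d}$.

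It remains to prove that, for a constant $C>0$ independent of $\lambda<0$, every non-negative solution of \eqref{1.1} satisfies $\|u\|_{L^{\infty}}\leq C|\lambda|^{1/(p-1)}$. Suppose not: pick $\lambda_{n}\to-\infty$ and solutions $u_{n}$ with $M_{n}\coloneqq\|u_{n}\|_{L^{\infty}}$ and $|\lambda_{n}|/M_{n}^{p-1}\to 0$. Rescaling about a maximum point $x_{n}$ with $\nu_{n}\coloneqq M_{n}^{(1-p)/2}$ as in \eqref{ii.11} (if the limit of $x_{n}$ lies in the interior of some $I^{+}_{j_{0}}$) or \eqref{ii.19} (if it lies on $\partial I^{+}_{j_{0}}$, using \ref{hp-G}), the coefficient $\lambda_{n}\nu_{n}^{2}=\lambda_{n}/M_{n}^{p-1}$ now vanishes in the limit automatically, so Cases 1 and 2 in the proof of Lemma~\ref{leii.3} (with $w\equiv 0$) apply verbatim and produce a non-negative solution of a Liouville-type problem that contradicts Lemma~\ref{leii.1}. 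Combining with the exponential decay,
\begin{equation*}
\|u\|_{L^{\infty}(K)}\leq 2C\,|\lambda|^{1/(p-1)}\,e^{-\sqrt{|\lambda|}\,d}\longrightarrow 0\quad\text{as}\;\;\lambda\to-\infty,
\end{equation*}
so any sufficiently negative $\tilde\lambda_{i}$ produces the claimed bound.

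The delicate step is the polynomial a priori bound: one must run the blow-up of Lemma~\ref{leii.3} in a regime where the parameter $\lambda$ is itself unbounded, and one must check that $M_{n}$ cannot outgrow the nonlinear scale $|\lambda_{n}|^{1/(p-1)}$ set by the equation. Once that scale is trapped, the exponential decay produced by the $\cosh$ barrier trivially beats the polynomial growth and closes the argument; the rest of the analysis is routine.
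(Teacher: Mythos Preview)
The paper does not supply its own proof of this lemma; it simply cites Theorem~3.1 of Fencl and L\'{o}pez-G\'{o}mez \cite{FeLG-22}. Your $\cosh$-barrier comparison on $I_{i}^{-}$ is correct and cleanly yields $\|u\|_{L^{\infty}(K)}\leq 2\|u\|_{L^{\infty}}e^{-\sqrt{|\lambda|}\,d}$, so the whole argument reduces, as you say, to a polynomial a priori bound $\|u\|_{L^{\infty}}\leq C|\lambda|^{q}$ for some $q>0$.

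There is, however, a genuine gap in your blow-up step. You assert that ``the coefficient $\lambda_{n}\nu_{n}^{2}=\lambda_{n}/M_{n}^{p-1}$ now vanishes in the limit automatically, so Cases~1 and~2 in the proof of Lemma~\ref{leii.3} apply verbatim.'' The identity $\lambda_{n}\nu_{n}^{2}=\lambda_{n}/M_{n}^{p-1}$ holds only for the Case~1 scaling $\nu_{n}=M_{n}^{(1-p)/2}$. In Case~2 (the boundary case $x_{0}\in\partial I_{j_{0}}^{+}$) the paper uses the \emph{different} scaling $\nu_{n}=M_{n}^{(1-p)/(2+\gamma)}$ with $\gamma=\gamma_{j_{0}}>0$, so the linear coefficient in the rescaled equation is
\[
\lambda_{n}\nu_{n}^{2}=\lambda_{n}\,M_{n}^{-\frac{2(p-1)}{2+\gamma}},
\]
and this is \emph{not} controlled by your hypothesis $|\lambda_{n}|/M_{n}^{p-1}\to 0$. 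For instance, if $|\lambda_{n}|\sim M_{n}^{(p-1)/2}$ and $\gamma>2$, then $|\lambda_{n}|\nu_{n}^{2}\sim M_{n}^{(p-1)(\gamma-2)/(2(2+\gamma))}\to+\infty$, so the $\mathcal{C}^{2}$-compactness fails; and if $|\lambda_{n}|\nu_{n}^{2}\to -c<0$, the limiting equation picks up a term $-c\,v$ that destroys the concavity on which Lemma~\ref{leii.1} relies.

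The repair is easy but not ``verbatim'': aim for the weaker bound $\|u\|_{L^{\infty}}\leq C|\lambda|^{q}$ with $q\coloneqq (2+\gamma_{\max})/\bigl(2(p-1)\bigr)$, $\gamma_{\max}\coloneqq\max_{i}\gamma_{i}$. Its negation gives $|\lambda_{n}|=o\bigl(M_{n}^{2(p-1)/(2+\gamma_{\max})}\bigr)$, and since $2(p-1)/(2+\gamma_{\max})\leq 2(p-1)/(2+\gamma)\leq p-1$, this forces $\lambda_{n}\nu_{n}^{2}\to 0$ under \emph{both} scalings, after which Cases~1 and~2 do go through. Any exponent $q>0$ suffices for your final step, since $|\lambda|^{q}e^{-\sqrt{|\lambda|}\,d}\to 0$ as $\lambda\to-\infty$ regardless of~$q$.
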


We conclude this section with a technical result establishing that, for sufficiently large $|\lambda|$, with $\lambda<0$, the $L^\infty$-norms of the positive solutions of \eqref{1.1} must be either small or large, but cannot take intermediate values.

\begin{lemma}\label{leii.5}
For any given $\rho>0$, there exists $\hat{\lambda}(\rho)<0$ such that, for every $\lambda<\hat{\lambda}(\rho)$ and any non-negative solution $u$ of \eqref{1.1},
\begin{equation*}
\|u\|_{L^{\infty}(I^{+}_{i})} \neq \rho \quad \hbox{for all} \;\; i\in\{1,\ldots,n\}.
\end{equation*}
\end{lemma}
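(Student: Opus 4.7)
The strategy is a proof by contradiction combined with Lemma~\ref{leii.4}. Suppose the conclusion fails: then there exist $\rho>0$, a sequence $\lambda_n\to -\infty$ and non-negative solutions $u_n$ of \eqref{1.1} with $\lambda=\lambda_n$ for which $\|u_n\|_{L^\infty(I^+_{i_n})}=\rho$ for some $i_n\in\{1,\ldots,n\}$. Since the index set is finite, after passing to a subsequence we may fix $i_n=i_0$. By continuity, the supremum of $u_n$ on the open interval $I^+_{i_0}$ is attained at some $x_n\in \overline{I^+_{i_0}}=[\sigma_{i_0},\tau_{i_0}]$; the argument then splits according to whether $x_n$ lies in the open interval $I^+_{i_0}$ or at one of its endpoints $\{\sigma_{i_0},\tau_{i_0}\}$.

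If, up to a subsequence, $x_n\in I^+_{i_0}$, then $x_n$ is an interior maximum of $u_n$, so $u_n''(x_n)\leq 0$. Evaluating the differential equation in \eqref{1.1} at $x_n$ yields $0\leq -u_n''(x_n)=\lambda_n\rho+a(x_n)\rho^p$, whence $a(x_n)\geq |\lambda_n|/\rho^{p-1}\to +\infty$, contradicting the boundedness of $a\in\mathcal{C}([0,L])$. Otherwise $x_n$ is constantly $\sigma_{i_0}$ or constantly $\tau_{i_0}$ along a subsequence; by symmetry I only treat $x_n=\sigma_{i_0}$. Since $u_n(\sigma_{i_0})=\rho>0=u_n(0)$, necessarily $\sigma_{i_0}>0$, so the adjacent interval $I^-_{i_0-1}=(\tau_{i_0-1},\sigma_{i_0})$ is non-empty and $a\leq 0$ on $[\tau_{i_0-1},\sigma_{i_0}]$. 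Fix a closed subinterval $K\subset I^-_{i_0-1}$ containing a point $z$; Lemma~\ref{leii.4} applied with $\delta=\rho/2$ provides $u_n(z)\leq \rho/2$ for all sufficiently large $n$. On $[z,\sigma_{i_0}]$, the sign relations $\lambda_n\leq 0$, $a\leq 0$, $u_n\geq 0$ give $u_n''=-\lambda_n u_n-a u_n^p\geq 0$, i.e.\ $u_n$ is convex there. Convexity combined with $u_n(z)\leq \rho/2$ and $u_n(\sigma_{i_0})=\rho$ then forces
\[
u_n'(\sigma_{i_0}) \;\geq\; \frac{u_n(\sigma_{i_0})-u_n(z)}{\sigma_{i_0}-z} \;\geq\; \frac{\rho}{2(\sigma_{i_0}-z)} \;>\;0,
\]
which contradicts $u_n'(\sigma_{i_0})\leq 0$, a relation that must hold because $\sigma_{i_0}$ is a maximum of $u_n$ on $[\sigma_{i_0},\tau_{i_0}]$.

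The sought threshold $\hat{\lambda}(\rho)$ is then obtained as the minimum (most negative) of the finitely many thresholds produced, one for each $i_0\in\{1,\ldots,n\}$. The main obstacle I expect is this boundary case: at an interior maximum the equation delivers a pointwise contradiction almost immediately, whereas at an endpoint of $I^+_{i_0}$ one must exploit Lemma~\ref{leii.4} on the neighboring negative interval together with the resulting convexity of $u_n$ to produce a positive one-sided derivative incompatible with the endpoint being a maximum from the right.
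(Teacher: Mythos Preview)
Your proof is correct and follows essentially the same approach as the paper's: both split into the interior-maximum case (where evaluating the equation at the maximum forces $a(x_n)\to+\infty$) and the boundary case (where Lemma~\ref{leii.4} on the adjacent negativity interval plus the convexity of $u_n$ there yields $u_n'(\sigma_{i_0})>0$, contradicting that $\sigma_{i_0}$ is a right-endpoint maximum). The only cosmetic difference is that you frame the argument as a contradiction via sequences $\lambda_n\to-\infty$, whereas the paper argues directly, producing explicit thresholds $\lambda_i,\tilde\lambda_i$ for each case and each $i$ and then taking their minimum; your final remark about ``the minimum of the finitely many thresholds produced'' thus fits the paper's phrasing more than your own sequence argument, but this does not affect correctness.
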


\begin{proof}
Let $\rho>0$. Let $u\in \mathcal{C}^{2}([0,L])$ be a non-negative solution of \eqref{1.1} and fix an arbitrary $i\in\{1,\ldots,n\}$. Let us start by assuming that there is $x_{0}\in I^{+}_{i}$ such that
\begin{equation*}
u(x_{0})=\|u\|_{L^{\infty}(I^{+}_{i})}.
\end{equation*}
Then, since $x_0$ is an interior maximum, we have that
\begin{equation*}
  u'(x_{0})=0, \quad u''(x_{0})\leq 0.
\end{equation*}
Thus, it follows from \eqref{1.1} that
\begin{equation*}
0 \leq \lambda \|u\|_{L^{\infty}(I^{+}_{i})} + \|a\|_{L^{\infty}}\|u\|_{L^{\infty}(I^{+}_{i})}^{p}.
\end{equation*}
Hence,
\begin{equation}
\label{ii.21}
\|u\|_{L^{\infty}(I^{+}_{i})}\geq \left(\frac{-\lambda}{\|a\|_{L^{\infty}}}\right)^{\!\frac{1}{p-1}}\equiv r_\lambda.
\end{equation}
As $\lim_{\lambda\downarrow-\infty}r_\lambda=+\infty$, there exists $\lambda_i<0$ such that $r_\lambda > \rho$ for all $\lambda < \lambda_i$. Therefore, \eqref{ii.21} implies that
\begin{equation*}
\|u\|_{L^{\infty}(I^{+}_{i})} > \rho \quad \hbox{for all} \;\; \lambda<\lambda_i.
\end{equation*}
Subsequently, we assume that the maximum of $u$ in $\overline{I^{+}_{i}}$ is attained on its boundary, in $x_{0}\in \partial I^{+}_{i}$. Suppose, in addition, that $x_{0}\in \{0,L\}$. Then,
\begin{equation*}
  u(x_0)=u'(x_0)=0
\end{equation*}
and, hence, $u\equiv 0$ in $I^{+}_{i}$. Therefore, $u\equiv 0$ in $[0,L]$, which entails
\begin{equation*}
 \|u\|_{L^{\infty}(I^{+}_{i})} =0 < \rho \quad \hbox{for all} \;\; i\in\{1,\ldots,n\}.
\end{equation*}
When $x_0\neq 0, L$, $x_0$ must be a point dividing the positivity interval $I^{+}_{i}=(\sigma_i,\tau_i)$ and some adjacent negativity interval, say for instance $I^{-}_{i-1}=(\tau_{i-1},\sigma_i)$. Then, $x_{0}=\sigma_{i}$. Pick a sufficiently small $\eta>0$ such that
\begin{equation*}
K_{i}\coloneqq[\tau_{i-1}+\eta, x_0-\eta]\subset I^{-}_{i-1}.
\end{equation*}
By Lemma~\ref{leii.4}, there exists $\tilde \lambda_i<0$ such that, for every solution $u\geq 0$ of \eqref{1.1},
\begin{equation}
\label{ii.22}
u(x_0-\eta) < \rho\quad \hbox{for all} \;\; \lambda < \tilde \lambda_i.
\end{equation}
Suppose that there exist $\lambda_{0}<\tilde \lambda_i$ and a solution $u\in\mathcal{C}^{2}([0,L])$ of \eqref{1.1} with $\lambda=\lambda_{0}$ such that
\begin{equation}
\label{ii.23}
u(x_0)=u(\sigma_i)=\|u\|_{L^{\infty}(I^{+}_{i})} \geq \rho.
\end{equation}
Since $a\leq 0$ in $\overline{I_i^-}$, it follows from \eqref{1.1} that
\begin{equation*}
-u''=\lambda_{0} u +a(x)u^p \leq 0 \quad \hbox{in}\;\; I_i^-.
\end{equation*}
Thus, $u''\geq 0$ in $\overline{I_i^-}$. Actually,
\begin{equation*}
  u''(x_0)=-\lambda_0 u(x_0) > 0.
\end{equation*}
Therefore, \eqref{ii.22} and \eqref{ii.23} imply that $u'(x_0)>0$, which contradicts
the fact that $u$ attaints at $x_0$ its maximum in $\overline{I_i^+}$. This contradiction shows that,
\begin{equation*}
\|u\|_{L^{\infty}(I^{+}_{i})} < \rho \quad \hbox{for all} \;\; \lambda<\tilde \lambda_i.
\end{equation*}
Consequently, by choosing $\hat \lambda_i\coloneqq\min\{\lambda_i,\tilde \lambda_i\}$, we have proved that, for every non-negative solution $u$ of \eqref{1.1},
\begin{equation*}
\|u\|_{L^{\infty}(I^{+}_{i})} \neq \rho \quad \hbox{for all} \;\; \lambda<\hat \lambda_i.
\end{equation*}
By taking
\begin{equation*}
\hat{\lambda}(\rho)\coloneqq\min\bigl{\{} \hat \lambda_i \colon i\in\{1,\ldots,n\} \bigr{\}},
\end{equation*}
the proof is completed.
\end{proof}

\section{Proof of Theorem~\ref{th1.1}}\label{section-3}

The main goal of this section is to deliver the proof of Theorem~\ref{th1.1}. First, we will introduce
a topological setting for studying problem \eqref{1.1}. Our approach is based on the Leray--Schauder degree. Accordingly, we first transform the boundary value problem \eqref{1.1} into a fixed point equation in a Banach space. Next, we prove the existence of fixed points by detecting some domains where we can compute the topological degree and showing that it is different from zero. This leads readily to the proof of Theorem~\ref{th1.1}.

\subsection{Topological degree setting}\label{section-3.1}

Let us consider the boundary value problem
\begin{equation}\label{iii.1}
\begin{cases}
\, -u''=\lambda u^{+}+a(x) (u^{+})^{p} \quad \hbox{in}\;\; (0,L),
\\ \, u(0)=u(L)=0,
\end{cases}
\end{equation}
where $u^{+}$ denotes the positive part of the function $u$, that is,
\begin{equation*}
   u^{+}\coloneqq\max\{u,0\}.
\end{equation*}
By the maximum principle (see, e.g., Feltrin and Zanolin \cite[Lem.~2.1]{FeZa-15}), every solution of \eqref{iii.1} is non-negative in $[0,L]$ and thus solves \eqref{1.1}. Moreover, if $u\neq 0$ is a non-negative solution of \eqref{iii.1}, then $u$ is a strongly positive solution of \eqref{1.1}, in the sense that  $u(x)>0$ for all $x\in (0,L)$,
$u'(0)>0$, and $u'(L)<0$.
\par
Our first goal is to transform \eqref{iii.1} into a fixed point equation in a suitable Banach space.
To this end, we introduce the linear operator $K \colon \mathcal{C}([0,L])\to \mathcal{C}^2([0,L])$ defined, for every $f \in \mathcal{C}([0,L])$, by
\begin{equation*}
(K f)(x)\coloneqq \int_0^L \mathscr{K}(x,y) f(y) \,\mathrm{d}y, \quad x\in[0,L],
\end{equation*}
where $\mathscr{K}\in\mathcal{C}([0,L]^2)$ is the kernel given by
\begin{equation*}
\mathscr{K}(x,y) \coloneqq
\begin{cases}
\, y(L-x) & \hbox{if}\;\; y\leq x,
\\ \, x(L-y) & \hbox{if}\;\; y>x.
\end{cases}
\end{equation*}
By the definition of the integral operator $K$, for every $f \in \mathcal{C}([0,L])$, the function $u\coloneqq Kf$ is the unique solution of the linear boundary value problem
\begin{equation*}
\begin{cases}
\, -u''= f \quad \hbox{in}\;\; [0,L], \\ \, u(0)=u(L)=0.
\end{cases}
\end{equation*}
Subsequently, for every integer $k\geq0$, we denote by $\mathcal{C}^k_0([0,L])$ the closed subspace of
the Banach space $\mathcal{C}^{k}([0,L])$ consisting of all functions $u\in \mathcal{C}^{k}([0,L])$ such that
$u(0)=u(L)=0$.
It is easily seen that $K \colon \mathcal{C}([0,L])\to \mathcal{C}_0^2([0,L])$ is linear and continuous.
Next, we consider the canonical injection $\jmath \colon \mathcal{C}^2_0([0,L]) \hookrightarrow \mathcal{C}([0,L])$.
Thanks to the Ascoli--Arzel\`{a} Theorem, $\jmath$ is a linear compact operator. Thus,
\begin{equation*}
\mathcal{K} \coloneqq \jmath {K} \colon \mathcal{C}([0,L])\to\mathcal{C} ([0,L])
\end{equation*}
also is a linear compact operator. By construction, the problem \eqref{iii.1} can be expressed as a fixed point equation. Indeed, $u$ solves \eqref{iii.1} if and only if
\begin{equation}
\label{iii.2}
u = \mathcal{K} (\lambda u^{+}+ a (u^{+})^p).
\end{equation}
Note that, in terms of  the nonlinear operator $\Phi_{\lambda} \colon \mathcal{C}([0,L])\to\mathcal{C}([0,L])$ defined by
\begin{equation*}
(\Phi_{\lambda} u)(x)\coloneqq\int_{0}^{L}\mathscr{K}(x,y) \bigl[\lambda u^{+}(y)+ a(y) (u^{+}(y))^p\bigr] \, \mathrm{d}y, \quad x\in [0,L],
\end{equation*}
\eqref{iii.2} can be equivalently expressed as
\begin{equation}
\label{iii.3}
  u = \Phi_\lambda u.
\end{equation}
By construction, solving \eqref{1.1} is equivalent to solve \eqref{iii.3}. Clearly, $\Phi_{\lambda}$ is completely continuous in $\mathcal{C}([0,L])$ endowed with the maximum norm. Therefore, for any given open and bounded subset $\mathcal{O} \subset \mathcal{C}([0,L])$ such that
\begin{equation*}
\Phi_{\lambda} u \neq u \quad \hbox{for all} \;\; u\in\partial\mathcal{O},
\end{equation*}
the Leray--Schauder degree $\mathrm{deg}_{\mathrm{LS}}(I-\Phi_{\lambda}, \mathcal{O})$ is well defined.
To look for fixed points of $\Phi_{\lambda}$ (i.e., non-negative solutions of \eqref{1.1}), we should
detect appropriate domains $\mathcal{O}$ such that
\begin{equation*}
    \mathrm{deg}_{\mathrm{LS}}(I-\Phi_{\lambda}, \mathcal{O})\neq0.
\end{equation*}
In order to do this we will use the additivity/excision, normalization and homotopy invariance properties of the Leray--Schauder degree, as discussed, e.g., by Fonseca and Gangbo in \cite{FoGa-95}.

\subsection{The sets $\Omega^{\mathcal{I}}$ and the computation of the degree}\label{section-3.2}

Let us fix an arbitrary constant $\rho>0$. By Lemma~\ref{leii.5}, there exists $\hat{\lambda}(\rho)<0$ such that, for every $\lambda < \hat{\lambda}(\rho)$, every non-negative solution of \eqref{1.1} satisfies
\begin{equation*}
\|u\|_{L^{\infty}(I^{+}_{i})} \neq \rho \quad \hbox{for all}\;\; i\in\{1,\ldots,n\}.
\end{equation*}
For every $\lambda<0$, let $0<r_{\lambda}<R_{\lambda}$ be the constants whose existence is guaranteed
by Lemmas~\ref{leii.2} and \ref{leii.3}, respectively. Since
\begin{equation*}
\lim_{\lambda\to-\infty}r_{\lambda}=\lim_{\lambda\to -\infty} \left(\frac{-\lambda}{\|a\|_{L^{\infty}}}\right)^{\!\frac{1}{p-1}} = +\infty,
\end{equation*}
there exists $\lambda_{*}<\hat{\lambda}(\rho)$  such that
\begin{equation*}
\rho < r_{\lambda} \quad \hbox{for all} \;\; \lambda < \lambda_{*},
\end{equation*}
where $\hat{\lambda}(\rho)$ is the constant whose existence is guaranteed by Lemma \ref{leii.5}.
\par
With all these constants in mind, for any given set of indexes $\mathcal{I} \subseteq\{1,\ldots,n\}$ and $\lambda<\lambda_*$, we can introduce the following open and bounded set
\begin{equation*}
\Omega^{\mathcal{I}}_{\lambda}
\coloneqq
\left\{u\in\mathcal{C}([0,L]) \colon \|u\|_{L^{\infty}} < R_{\lambda}, \; \|u\|_{L^{\infty}(I^{+}_{i})} < \rho \;\; \hbox{if}\;\; i \in \{1,\dots,n\}\setminus\mathcal{I} \right\}.
\end{equation*}
In particular,
\begin{equation*}
\Omega^\emptyset_{\lambda}
\coloneqq
\left\{u\in\mathcal{C}([0,L]) \colon \|u\|_{L^{\infty}} < R_{\lambda}, \; \|u\|_{L^{\infty}(I^{+}_{i})} < \rho \;\; \hbox{for all}\;\; i \in \{1,\dots,n\}\right\}.
\end{equation*}
The next results  exploit the lemmas of Section~\ref{section-2} to compute the degrees of the operators $I-\Phi_{\lambda}$ in these sets $\Omega^{\mathcal{I}}_{\lambda}$.

\begin{proposition}\label{pr3.1}
For every $\lambda<\lambda_{*}$, it holds that
\begin{equation*}
\mathrm{deg}_{\mathrm{LS}}(I-\Phi_{\lambda}, \Omega^{\emptyset}_\lambda)=1.
\end{equation*}
\end{proposition}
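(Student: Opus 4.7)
The plan is to use the homotopy invariance of the Leray--Schauder degree via the compact deformation $H_\vartheta(u) \coloneqq u - \vartheta \Phi_\lambda u$, $\vartheta \in [0,1]$. Since $\Phi_\lambda$ is completely continuous, so is each $\vartheta \Phi_\lambda$, and the family depends continuously on $\vartheta$. At $\vartheta = 0$ one has $H_0 = I$, and since $0 \in \Omega^\emptyset_\lambda$, the normalization property gives $\deg_{\mathrm{LS}}(I,\Omega^\emptyset_\lambda) = 1$. Hence it will suffice to check that $\vartheta \Phi_\lambda u \neq u$ for every $u \in \partial \Omega^\emptyset_\lambda$ and every $\vartheta \in [0,1]$, after which homotopy invariance closes the argument.

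Now I would analyze the fixed point equation on the boundary. If $u$ satisfies $u = \vartheta \Phi_\lambda u$, then
\begin{equation*}
-u'' = \vartheta\bigl(\lambda u^+ + a(x)(u^+)^p\bigr), \quad u(0)=u(L)=0,
\end{equation*}
and the maximum principle recalled at the start of Subsection~\ref{section-3.1} forces $u \geq 0$, so $u$ is a non-negative solution of \eqref{ii.5}. Lemma~\ref{leii.2} then leaves only two possibilities: either $u \equiv 0$, in which case $u$ lies in the interior of $\Omega^\emptyset_\lambda$ (all norms vanish, strictly below $\rho$ and $R_\lambda$) and hence not on the boundary; or $\vartheta \in (0,1]$, $u \not\equiv 0$, and $\|u\|_{L^\infty} > r_\lambda$. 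In this second case, the sign analysis at the maximum point inside the proof of Lemma~\ref{leii.2} shows that the global maximum is necessarily attained at some interior point $x_0 \in I_{j}^+$, since $u''(x_0) \leq 0$ together with $\lambda < 0$ compels $a(x_0) > 0$.

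Combining this with the defining choice $\lambda_* < \hat\lambda(\rho)$, which was calibrated precisely so that $\rho < r_\lambda$ for all $\lambda < \lambda_*$, I obtain
\begin{equation*}
\|u\|_{L^\infty(I_j^+)} \geq u(x_0) = \|u\|_{L^\infty} > r_\lambda > \rho,
\end{equation*}
which contradicts the inequalities $\|u\|_{L^\infty(I_i^+)} \leq \rho$ that hold throughout $\overline{\Omega^\emptyset_\lambda}$, and in particular on $\partial \Omega^\emptyset_\lambda$. Consequently the homotopy $H_\vartheta$ is admissible, and homotopy invariance yields
\begin{equation*}
\deg_{\mathrm{LS}}(I - \Phi_\lambda, \Omega^\emptyset_\lambda) = \deg_{\mathrm{LS}}(I, \Omega^\emptyset_\lambda) = 1.
\end{equation*}

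There is no substantive obstacle once the correct homotopy is picked: the entire construction of $\Omega^\emptyset_\lambda$, and in particular the choice of $\lambda_*$ so that $\rho < r_\lambda$, was engineered exactly to ensure that any nontrivial fixed point of the homotopy escapes the region where $\|u\|_{L^\infty(I_i^+)} \leq \rho$. The only mildly delicate point is observing that the global maximum of such a solution sits inside one of the positivity intervals $I_j^+$, which is extracted from the sign analysis already carried out in the proof of Lemma~\ref{leii.2} rather than from its statement.
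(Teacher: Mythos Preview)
Your proof is correct and follows essentially the same approach as the paper: both use the linear homotopy $\vartheta\Phi_\lambda$, the maximum principle to pass to \eqref{ii.5}, and Lemma~\ref{leii.2} together with $\rho<r_\lambda$ to rule out nontrivial fixed points in $\overline{\Omega^\emptyset_\lambda}$. The only cosmetic difference is that the paper phrases the localization of the maximum via convexity of $u$ on each $I_i^-$, whereas you extract it from the sign analysis inside the proof of Lemma~\ref{leii.2}; these are equivalent observations.
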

\begin{proof}
Consider the parameter-dependent boundary value problem \eqref{ii.5}, where $\vartheta\in[0,1]$.
First, we observe that, since $\lambda<\lambda_*<0$,  every non-negative solution $u\in\Omega^{\emptyset}_\lambda$ of \eqref{ii.5} is convex in each $I_{i}^{-}$,  and so $\|u\|_{L^\infty}$ is reached in some interval $I_{i}^{+}$. Thus, $\|u\|_{L^\infty}<\rho$. Equivalently, $u\in B_\rho(0)$; here $B_\rho(0)$ denotes the open ball of radius $\rho$ centered at the origin in $\mathcal{C}([0,L])$.  Note that $B_{\rho}(0) \subset \Omega^{\emptyset}_\lambda$, because, by construction,
\begin{equation*}
   0<\rho<r_\lambda<R_\lambda\quad \hbox{if}\;\; \lambda<\lambda_*.
\end{equation*}
Subsequently, we consider the homotopy operator
\begin{equation*}
H \colon [0,1]\times \mathcal{C}([0,L]) \to \mathcal{C}([0,L]), \quad H(\vartheta, u)\coloneqq \vartheta \Phi_{\lambda}(u).
\end{equation*}
By the definition of $\Phi_\lambda$, it becomes apparent that $u$ is a fixed point of the completely continuous operator $H(\vartheta, \cdot)$, for some $\vartheta\in[0,1]$, if and only if $u$ is a non-negative solution of \eqref{ii.5}. Since $\rho<r_{\lambda}$, by Lemma~\ref{leii.2}, we have that
\begin{equation*}
\vartheta\Phi_{\lambda}u\neq u \quad \hbox{for all}
 \;\; \vartheta\in[0,1]\;\;\hbox{and}\;\; u\in \partial B_\rho(0).
\end{equation*}
As a consequence, the degree $\mathrm{deg}(I-H(\vartheta,\cdot),\Omega^{\emptyset}_{\lambda})$ is well defined for every $\vartheta\in[0,1]$, and, by homotopy invariance, it is constant with respect to $\vartheta$. Therefore,
\begin{align*}
\mathrm{deg}_{\mathrm{LS}}(I-\Phi_{\lambda},\Omega^{\emptyset}_{\lambda})
&= \mathrm{deg}_{\mathrm{LS}}(I-H(1,\cdot),\Omega^{\emptyset}_{\lambda})\\ &
= \mathrm{deg}_{\mathrm{LS}}(I-H(0,\cdot),\Omega^{\emptyset}_{\lambda})
= \mathrm{deg}_{\mathrm{LS}}(I,\Omega^{\emptyset}_{\lambda})\\ &
= \mathrm{deg}_{\mathrm{LS}}(I,B_\rho(0)) = 1.
\end{align*}
where the last two equalities follow from the excision and normalization properties of the degree, respectively. The proof is complete.
\end{proof}

\begin{proposition}\label{pr3.2}
For every $\lambda<\lambda_{*}$ and any non-empty subset $\mathcal{I}\subseteq\{1,\dots,n\}$, it holds that
\begin{equation*}
\mathrm{deg}_{\mathrm{LS}}(I-\Phi_{\lambda}, \Omega^{\mathcal{I}}_{\lambda}) = 0.
\end{equation*}
\end{proposition}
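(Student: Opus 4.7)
The plan is to construct a homotopy that deforms $\Phi_\lambda$ into an operator with no fixed points at all inside $\Omega^{\mathcal{I}}_\lambda$, exploiting the non-existence statement of Lemma~\ref{leii.3}(ii). To this end, I would fix a function $w\in\mathcal{C}([0,L])$ meeting the hypotheses of Lemma~\ref{leii.3} for the prescribed subset $\mathcal{I}$; a concrete choice is
\begin{equation*}
w(x)\coloneqq
\begin{cases}
[\mathrm{dist}(x,\partial I^+_i)]^{\gamma_i} & \text{if } x\in\overline{I^+_i} \text{ with } i\in\mathcal{I},\\
0 & \text{otherwise in }[0,L],
\end{cases}
\end{equation*}
so that the factorization required in Lemma~\ref{leii.3} holds with $q_i\equiv 1$ for $i\in\mathcal{I}$. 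Denote by $\mu_*=\mu_*(\lambda)>0$ and $R_\lambda>0$ the constants provided by Lemma~\ref{leii.3}, and consider the completely continuous homotopy
\begin{equation*}
H\colon[0,\mu_*]\times\mathcal{C}([0,L])\to\mathcal{C}([0,L]),\qquad H(\mu,u)\coloneqq\Phi_\lambda(u)+\mu\,\mathcal{K}(w).
\end{equation*}
By construction $u=H(\mu,u)$ if and only if $u$ solves \eqref{ii.7}; since $w\geq 0$, the maximum principle (cf.\ \cite[Lem.~2.1]{FeZa-15}) forces any such fixed point to be non-negative.

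The next step is to verify that $H$ is admissible on $\partial\Omega^{\mathcal{I}}_\lambda$. A boundary element satisfies either $\|u\|_{L^\infty}=R_\lambda$, or $\|u\|_{L^\infty(I^+_i)}=\rho$ for some $i\in\{1,\ldots,n\}\setminus\mathcal{I}$. The first case is excluded for every $\mu\in[0,\mu_*]$ by the $\mu$-uniform bound of Lemma~\ref{leii.3}(i). For the second, the decisive observation is that, by the choice of $w$, one has $w\equiv 0$ on $I^+_i$ for every $i\notin\mathcal{I}$ and on every negativity interval $I^-_k$; hence on these intervals the equation in \eqref{ii.7} coincides with that of \eqref{1.1}. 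Combining this local coincidence with the $\mu$-independent $L^\infty$ bound $R_\lambda$ from Lemma~\ref{leii.3}(i) (which replaces the universal a priori bounds used in Lemmas~\ref{leii.4} and \ref{leii.5}), the reasoning of Lemma~\ref{leii.5} transfers and yields $\|u\|_{L^\infty(I^+_i)}\neq\rho$ for every $i\notin\mathcal{I}$ whenever $\lambda<\lambda_*<\hat\lambda(\rho)$, thereby ruling out the second case as well.

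With admissibility in hand, homotopy invariance of the Leray--Schauder degree gives
\begin{equation*}
\mathrm{deg}_{\mathrm{LS}}(I-\Phi_\lambda,\Omega^{\mathcal{I}}_\lambda)=\mathrm{deg}_{\mathrm{LS}}(I-H(0,\cdot),\Omega^{\mathcal{I}}_\lambda)=\mathrm{deg}_{\mathrm{LS}}(I-H(\mu_*,\cdot),\Omega^{\mathcal{I}}_\lambda).
\end{equation*}
By Lemma~\ref{leii.3}(ii), the problem \eqref{ii.7} admits no non-negative solution $u\not\equiv 0$ when $\mu=\mu_*$; and $u\equiv 0$ is not a solution either, since $\mu_*\,w\not\equiv 0$ as $\mathcal{I}\neq\emptyset$. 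Consequently $H(\mu_*,\cdot)$ has no fixed points in $\Omega^{\mathcal{I}}_\lambda$ at all, so the rightmost degree vanishes by the solution property of the degree.

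The step I expect to be the most delicate is the admissibility check on the part of $\partial\Omega^{\mathcal{I}}_\lambda$ of the form $\|u\|_{L^\infty(I^+_i)}=\rho$ with $i\notin\mathcal{I}$: transferring Lemma~\ref{leii.5}---which ultimately rests on the decay estimate of Lemma~\ref{leii.4}, borrowed from \cite{FeLG-22}---to the perturbed equation \eqref{ii.7} requires verifying that the decay on negativity intervals is uniform across $\mu\in[0,\mu_*]$. This is natural because $w$ vanishes on those intervals and Lemma~\ref{leii.3}(i) supplies a $\mu$-independent a priori $L^\infty$ bound, but the ingredients must be combined with care.
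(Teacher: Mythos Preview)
Your approach is essentially the paper's: the same homotopy $H(\mu,u)=\Phi_\lambda(u)+\mu\,\mathcal{K}(w)$ based on Lemma~\ref{leii.3}, followed by homotopy invariance and the non-existence at $\mu=\mu_*$ to get degree zero. Your version is in fact more careful on two points: you write $\mu\,\mathcal{K}(w)$ rather than the paper's $\mu w$ (which is what actually makes fixed points correspond to \eqref{ii.7}), and you treat the boundary case $\|u\|_{L^\infty(I_i^+)}=\rho$, $i\notin\mathcal{I}$, explicitly by transferring the argument of Lemma~\ref{leii.5}, whereas the paper disposes of this case in a single line by ``the definition of $\Omega_\lambda^{\mathcal{I}}$'', which at face value is circular. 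Your diagnosis of the delicate point is exactly right: the interior-maximum case on $I_i^+$ transfers verbatim since $w\equiv 0$ there, while the boundary-maximum case needs the decay estimate on the adjacent $I^-$ (Lemma~\ref{leii.4}); because $w$ also vanishes on every $I^-$ and Lemma~\ref{leii.3}(i) provides the $\mu$-uniform bound $R_\lambda$ replacing whatever global a~priori bound underlies Lemma~\ref{leii.4}, the transfer goes through---though, as you say, one must check that the choice of $\hat\lambda(\rho)$ (and hence $\lambda_*$) can be made to absorb this $R_\lambda$-dependent version.
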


\begin{proof}
Fix any $\mathcal{I}\subseteq\{1,\dots,n\}$ with $\mathcal{I}\neq\emptyset$, and let $w\in\mathcal{C}([0,L])$ be a weight function satisfying \eqref{ii.6} and the remaining properties of the statement of Lemma \ref{leii.3}. Now, consider the parameter-dependent boundary value problem \eqref{ii.7}, where $\mu\in[0,\mu_{*}]$, with $\mu_{*}>0$ given by Lemma~\ref{leii.3} (ii).
\par
According to \eqref{ii.6}, $w\equiv0$ on each  $I_{i}^{-}$. Thus, since $\lambda<\lambda_*<0$, every non-negative solution $u\in\Omega^{\mathcal{I}}_{\lambda}$ of \eqref{ii.7} is convex in each $I_i^-$.
Hence, its maximum must be reached in some interval $I_{i}^{+}$.
Recalling that $w\equiv0$ in $I_{i}^{+}$ if $i\in\{1,\ldots,n\}\setminus\mathcal{I}$, it follows from
Lemma~\ref{leii.3} that
\begin{equation}
\label{iii.4}
   \|u\|_{L^{\infty}(I_{i}^{+})} < R_{\lambda} \quad \hbox{if}\;\; i\in\mathcal{I}.
\end{equation}
Moreover, by the definition of $\Omega_\lambda^\mathcal{I}$,
\begin{equation}
\label{iii.5}
\|u\|_{L^{\infty}(I_{i}^{+})} < \rho \quad \hbox{if}\;\; i\in\{1,\ldots,n\}\setminus\mathcal{I}.
\end{equation}
Throughout the rest of this proof, we consider the homotopy operator
\begin{equation*}
H \colon [0,\mu^{*}]\times \mathcal{C}([0,L]) \to \mathcal{C}([0,L]), \quad H(\mu, u)\coloneqq
 \Phi_{\lambda}u + \mu w.
\end{equation*}
By construction, we already know that, for every $\mu\in[0,\mu^{*}]$,  $u$ is a fixed point of the completely continuous operator $H(\mu, \cdot)$ if and only if $u$ is a non-negative solution of \eqref{ii.7}. Moreover,
according to \eqref{iii.4} and \eqref{iii.5}, it becomes apparent that
\begin{equation*}
H(\mu, u)\neq u \quad \hbox{for all} \;\; \mu\in[0,\mu^{*}]\;\; \hbox{and}\;\; u\in \partial \Omega^{\mathcal{I}}_{\lambda}.
\end{equation*}
In particular, the degree $\mathrm{deg}(I-H(\mu,\cdot),\Omega^{\mathcal{I}}_{\lambda})$ is well defined for every $\mu\in[0,\mu^{*}]$ and, by homotopy invariance, it is constant with respect to $\mu$. Therefore,
\begin{align*}
\mathrm{deg}_{\mathrm{LS}}(I-\Phi_{\lambda},\Omega^{\mathcal{I}}_{\lambda})
&= \mathrm{deg}_{\mathrm{LS}}(I-H(0,\cdot),\Omega^{\mathcal{I}}_{\lambda})
\\
&= \mathrm{deg}_{\mathrm{LS}}(I-H(\mu^{*},\cdot),\Omega^{\mathcal{I}}_{\lambda})
= 0,
\end{align*}
where the last equality follows from the fact that, thanks to Lemma~\ref{leii.3} (ii),
\begin{equation*}
   H(\mu^{*},u)=u
\end{equation*}
has no solution. This ends the proof.
\end{proof}

\subsection{Completing the proof of Theorem~\ref{th1.1}}\label{section-3.3}

To show the existence of, at least, $2^{n}-1$ positive solutions for \eqref{1.1}, it is sufficient to construct $2^{n}-1$ disjoint open and bounded subsets of $\mathcal{C}([0,L])$, not containing $u=0$, such that the Leray--Schauder degree of $I-\Phi_{\lambda}$ is different from zero on any of these sets. These properties guarantee the existence of, at least,  $2^{n}-1$ non-trivial fixed points of $\Phi_{\lambda}$,
which provide us with $2^{n}-1$ positive solutions of \eqref{1.1}. Our construction proceeds as follows.
For any given $\mathcal{I}\subseteq\{1,\ldots,n\}$, we consider the open and bounded set
\begin{equation*}
\Lambda^{\mathcal{I}}_{\lambda}\coloneqq
\left\{ u \in \mathcal{C}([0,L]) \colon
\begin{array}{l}
\|u\|_{L^{\infty}(0,L)} < R_{\lambda},
\vspace{1pt}\\
\|u\|_{L^{\infty}(I_{i}^{+})} > \rho \;\; \hbox{if}\;\; i\in\mathcal{I},
\vspace{1pt}\\
\|u\|_{L^{\infty}(I_{i}^{+})} < \rho\; \; \hbox{if}\;\; i\in\{1,\ldots,n\}\setminus\mathcal{I}
\end{array} \right\}.
\end{equation*}
The following properties hold from the definition of the $\Lambda^{\mathcal{I}}_{\lambda}$'s:
\begin{itemize}
\item since the number of nonempty subsets of a set with $n$ elements is
\begin{equation*}
\sum_{i=0}^{n}\binom{n}{i} = 2^n,
\end{equation*}
the cardinality of $\{\Lambda^{\mathcal{I}}_{\lambda} \colon \mathcal{I}\subseteq\{1,\ldots,n\}\}$ is $2^{n}$;
\item $0\in\Lambda^{\mathcal{I}}_{\lambda}$ if and only if $\mathcal{I}=\emptyset$;
\item $\Lambda^{\mathcal{J}}_{\lambda}\cap \Lambda^{\mathcal{K}}_{\lambda} = \emptyset$ if  $\mathcal{J}\neq \mathcal{K}$;
\item for every subset $\mathcal{I}\subseteq\{1,\ldots,n\}$,
\begin{equation}
\label{3.vi}
\Omega^{\mathcal{I}}_{\lambda} = \bigcup_{\mathcal{J}\subseteq \mathcal{I}}\Lambda^{\mathcal{J}}_{\lambda} \cup \bigcup_{i\in\mathcal{I}} \left\{ u \in \mathcal{C}([0,L]) \colon \|u\|_{L^{\infty}(0,L)} < R_{\lambda}, \; \|u\|_{L^{\infty}(I_{i}^{+})} = \rho \right\}.
\end{equation}
\end{itemize}

The next result is the key step to conclude the proof of Theorem~\ref{th1.1}.

\begin{proposition}\label{pr3.3}
For every  subset $\mathcal{I}\subseteq\{1,\dots,n\}$,
\begin{equation}\label{3.vii}
\mathrm{deg}_{\mathrm{LS}}(I-\Phi_{\lambda},\Lambda^{\mathcal{I}}_{\lambda}) = (-1)^{|\mathcal{I}|}
\quad \hbox{for all}\;\; \lambda<\lambda_{*},
\end{equation}
where $|\mathcal{I}|$ stands for the cardinal of $\mathcal{I}$, and $\lambda_*$ satisfies the prerequisites
of Section~\ref{section-3.2}; in particular, $\rho<r_\lambda$ if $\lambda<\lambda_*$.
\end{proposition}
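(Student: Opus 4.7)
The plan is to establish \eqref{3.vii} by induction on the cardinality $|\mathcal{I}|$, combining the two degree computations of Propositions~\ref{pr3.1} and \ref{pr3.2} with the additivity/excision property of the Leray--Schauder degree, applied through the decomposition \eqref{3.vi}.

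For the base case $\mathcal{I}=\emptyset$, I would note that the definition of $\Lambda^{\emptyset}_{\lambda}$ coincides verbatim with that of $\Omega^{\emptyset}_{\lambda}$, so Proposition~\ref{pr3.1} directly gives $\mathrm{deg}_{\mathrm{LS}}(I-\Phi_{\lambda},\Lambda^{\emptyset}_{\lambda})=1=(-1)^{0}$. For the inductive step, I would fix a non-empty $\mathcal{I}\subseteq\{1,\dots,n\}$ and assume that \eqref{3.vii} is already known for every proper subset of $\mathcal{I}$. The crucial observation is that the ``slice'' sets
\[
S_{i}\coloneqq\bigl\{u\in\mathcal{C}([0,L]):\|u\|_{L^{\infty}}<R_{\lambda},\ \|u\|_{L^{\infty}(I_{i}^{+})}=\rho\bigr\}, \quad i\in\mathcal{I},
\]
appearing in \eqref{3.vi} contain no fixed point of $\Phi_{\lambda}$: since $\lambda<\lambda_{*}\leq\hat{\lambda}(\rho)$, this is precisely the content of Lemma~\ref{leii.5}. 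Moreover, the sets $\Lambda^{\mathcal{J}}_{\lambda}$ with $\mathcal{J}\subseteq\mathcal{I}$ are pairwise disjoint open subsets of $\Omega^{\mathcal{I}}_{\lambda}$ (they are distinguished by which indices $i\in\mathcal{I}$ satisfy $\|u\|_{L^{\infty}(I_{i}^{+})}>\rho$), and together with the $S_{i}$'s they cover $\Omega^{\mathcal{I}}_{\lambda}$ by \eqref{3.vi}. The additivity and excision properties of the Leray--Schauder degree therefore yield
\[
\mathrm{deg}_{\mathrm{LS}}(I-\Phi_{\lambda},\Omega^{\mathcal{I}}_{\lambda}) = \sum_{\mathcal{J}\subseteq\mathcal{I}}\mathrm{deg}_{\mathrm{LS}}(I-\Phi_{\lambda},\Lambda^{\mathcal{J}}_{\lambda}).
\]

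To close the argument, I would apply Proposition~\ref{pr3.2} to the left-hand side, which vanishes since $\mathcal{I}\neq\emptyset$, and use the inductive hypothesis to rewrite the right-hand side as $\mathrm{deg}_{\mathrm{LS}}(I-\Phi_{\lambda},\Lambda^{\mathcal{I}}_{\lambda})+\sum_{\mathcal{J}\subsetneq\mathcal{I}}(-1)^{|\mathcal{J}|}$. Invoking the binomial identity $\sum_{\mathcal{J}\subseteq\mathcal{I}}(-1)^{|\mathcal{J}|}=(1-1)^{|\mathcal{I}|}=0$, one isolates $\mathrm{deg}_{\mathrm{LS}}(I-\Phi_{\lambda},\Lambda^{\mathcal{I}}_{\lambda})=(-1)^{|\mathcal{I}|}$, as desired.

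The only delicate point I anticipate is justifying the additivity step rigorously: one has to verify that no fixed point of $\Phi_{\lambda}$ sits on the common boundary of two distinct $\Lambda^{\mathcal{J}}_{\lambda}$'s inside $\Omega^{\mathcal{I}}_{\lambda}$. This boundary is contained in $\bigcup_{i\in\mathcal{I}}S_{i}$, so the obstruction collapses once Lemma~\ref{leii.5} is invoked. Beyond this, the proof is a purely combinatorial inclusion--exclusion on the Boolean lattice of subsets of $\mathcal{I}$, which is exactly what Möbius inversion of the identity $\sum_{\mathcal{J}\subseteq\mathcal{I}}d(\mathcal{J})=\delta_{\mathcal{I},\emptyset}$ produces.
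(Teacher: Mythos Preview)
Your proposal is correct and follows essentially the same route as the paper: induction on $|\mathcal{I}|$, using the decomposition \eqref{3.vi}, Lemma~\ref{leii.5} to rule out fixed points on the slices $S_i$, additivity/excision to obtain $\mathrm{deg}_{\mathrm{LS}}(I-\Phi_{\lambda},\Omega^{\mathcal{I}}_{\lambda})=\sum_{\mathcal{J}\subseteq\mathcal{I}}\mathrm{deg}_{\mathrm{LS}}(I-\Phi_{\lambda},\Lambda^{\mathcal{J}}_{\lambda})$, Propositions~\ref{pr3.1}--\ref{pr3.2} for the left-hand side, and the binomial identity to close the induction. The only cosmetic difference is that the paper phrases the induction via an auxiliary property $\mathscr{P}(k)$ on the cardinality and treats the case $|\mathcal{I}|=1$ separately, whereas you proceed directly by strong induction on proper subsets; the content is identical.
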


\begin{proof}
Although the proof follows the same steps as the one of Feltrin and Zanolin \cite[Lem.~4.1]{FeZa-15}, by the sake of completeness, since \eqref{1.1} differs from the problem of \cite{FeZa-15}, we will again deliver complete technical details here.
\par
Let $\lambda<\lambda_{*}$. Then, by Propositions \ref{pr3.1} and \ref{pr3.2}, we have that, for every subset $\mathcal{I}\subseteq\{1,\dots,n\}$,
\begin{equation}
\label{3.viii}
\mathrm{deg}_{\mathrm{LS}}(I-\Phi_{\lambda},\Omega_{\lambda}^{\mathcal{I}}) 
=
\begin{cases}
\, 0 &\hbox{if}\;\;  \mathcal{I}\neq \emptyset,
\\
\, 1 &\hbox{if}\;\;  \mathcal{I} = \emptyset.
\end{cases}
\end{equation}
Subsequently, we denote by $\mathcal{I}$ an arbitrary subset of $\{1,\dots,n\}$.
To prove  \eqref{3.vii} we will use an inductive argument on the cardinal of $\mathcal{I}$, $|\mathcal{I}|$. To do this, for every integer $k$ with $0\leq k \leq |\mathcal{I}|$, we introduce the auxiliary property
\begin{equation*}
\mathscr{P}(k) \;:\;\; \mathrm{deg}_{\mathrm{LS}}(I-\Phi_{\lambda},\Lambda_{\lambda}^{\mathcal{J}})=
(-1)^{|\mathcal{J}|}\;\; \hbox{for each}\;\; \mathcal{J}\subseteq\mathcal{I}\;\; \hbox{with, at most, $k$ elements.}
\end{equation*}
If we prove $\mathcal{P}(|\mathcal{I}|)$, then \eqref{3.vii} follows.
\medskip
\par
\noindent
\textit{Verification of $\mathscr{P}(0)$.}
If $\mathcal{I}=\emptyset$, then $\Omega_{\lambda}^{\emptyset} = \Lambda_{\lambda}^{\emptyset}$ and, since $|\emptyset|=0$, it is apparent that  \eqref{3.vii} follows from \eqref{3.viii}.
\par
\medskip
\noindent
\textit{Verification of $\mathscr{P}(1)$.}
For $\mathcal{J}=\emptyset$, the result has just been proven. Suppose $\mathcal{J}=\{j\}$, with $j\in\mathcal{I}$. Then, by \eqref{3.vi}, we have that
\begin{equation*}
\Omega^{\{j\}}_{\lambda} = \Lambda^{\emptyset}_{\lambda} \cup \Lambda^{\{j\}}_{\lambda} \cup  \left\{ u \in \mathcal{C}([0,L]) \colon \|u\|_{L^{\infty}(0,L)} < R_{\lambda}, \;\;\|u\|_{L^{\infty}(I_{j}^{+})} = \rho \right\}.
\end{equation*}
According to Lemma~\ref{leii.5}, there is no solution of $\Phi_{\lambda}u=u$ with $\|u\|_{L^{\infty}(I_{j}^{+})} = \rho$. Thus, by the additivity/excision property of the degree, we can infer from \eqref{3.viii} that
\begin{align*}
\mathrm{deg}_{\mathrm{LS}}(I-\Phi_{\lambda},\Lambda^{\mathcal{J}}_{\lambda}) & = \mathrm{deg}_{\mathrm{LS}}(I-\Phi_{\lambda},\Lambda^{\{j\}}_{\lambda}) = \mathrm{deg}_{\mathrm{LS}}(I-\Phi_{\lambda},\Omega^{\{j\}}_{\lambda}\setminus \Lambda^{\emptyset}_{\lambda}) \\
& = \mathrm{deg}_{\mathrm{LS}}(I-\Phi_{\lambda},\Omega^{\{j\}}_{\lambda}) - \mathrm{deg}_{\mathrm{LS}}(I-\Phi_{\lambda}, \Lambda^{\emptyset}_{\lambda})
\\
&= 0 -1 = -1 = (-1)^{|\mathcal{J}|}.
\end{align*}
\par
\medskip
\noindent
\textit{Verification of the implication $\mathscr{P}(k-1) \Rightarrow \mathscr{P}(k)$, for all $1\leq k \leq |\mathcal{I}|$.} Assume that $\mathscr{P}(k-1)$ holds for some $1\leq k \leq |\mathcal{I}|$. Then,
\begin{equation}
\label{3.ix}
  \mathrm{deg}_{\mathrm{LS}}(I-\Phi_{\lambda},\Lambda_{\lambda}^{\mathcal{J}})=
(-1)^{|\mathcal{J}|}
\end{equation}
for every subset $\mathcal{J}\subset \mathcal{I}$ having, at most, $k-1$ elements. Thus, to prove
$\mathscr{P}(k)$, it suffices to show that \eqref{3.ix} also holds for an arbitrary subset $\mathcal{J}\subseteq\mathcal{I}$ with $|\mathcal{J}|=k$. Pick one of these $\mathcal{J}$'s. Then, owing to
\eqref{3.vi}, we have that
\begin{equation*}
\Omega^{\mathcal{J}}_{\lambda} = \Lambda^{\mathcal{J}}_{\lambda}\cup\bigcup_{\mathcal{K}\subsetneq \mathcal{J}} \Lambda^{\mathcal{K}}_{\lambda} \cup
\bigcup_{i\in\mathcal{I}} \left\{ u \in \mathcal{C}([0,L]) \colon \;\|u\|_{L^{\infty}(0,L)} < R_{\lambda}, \;\;  \|u\|_{L^{\infty}(I_{i}^{+})} = \rho \right\}.
\end{equation*}
Moreover, by Lemma~\ref{leii.5}, the equation $\Phi_{\lambda}u=u$ cannot admit a fixed point
\begin{equation*}
    u\in \bigcup_{i\in\mathcal{I}} \left\{ u \in \mathcal{C}([0,L]) \colon\; \|u\|_{L^{\infty}(0,L)} < R_{\lambda},\;\; \|u\|_{L^{\infty}(I_{i}^{+})} = \rho \right\}.
\end{equation*}
Thus, by the additivity and existence properties of the degree,
\begin{equation*}
\mathrm{deg}_{\mathrm{LS}}(I-\Phi_{\lambda}, \Lambda^{\mathcal{J}}_{\lambda})  = \mathrm{deg}_{\mathrm{LS}}(I-\Phi_{\lambda}, \Omega^{\mathcal{J}}_{\lambda}) - \sum_{\mathcal{K}\subsetneq \mathcal{J}} \mathrm{deg}_{\mathrm{LS}}(I-\Phi_{\lambda}, \Lambda^{\mathcal{K}}_{\lambda}).
\end{equation*}
Consequently, thanks to \eqref{3.viii} and $\mathscr{P}(k-1)$, we find that
\begin{equation}
\label{3.10}
\mathrm{deg}_{\mathrm{LS}}(I-\Phi_{\lambda}, \Lambda^{\mathcal{J}}_{\lambda})
= 0 - \sum_{\mathcal{K}\subsetneq \mathcal{J}}(-1)^{|\mathcal{K}|} = - \sum_{\mathcal{K}\subseteq \mathcal{J}}(-1)^{|\mathcal{K}|} + (-1)^{|\mathcal{J}|}.
\end{equation}
Finally, note that
\begin{equation*}
\sum_{\mathcal{K}\subseteq \mathcal{J}}(-1)^{|\mathcal{K}|} = 0,
\end{equation*}
because in any finite set there are so many subsets of even cardinality as there are with odd cardinality.
 Therefore, \eqref{3.10} implies that
\begin{equation*}
\mathrm{deg}_{\mathrm{LS}}(I-\Phi_{\lambda}, \Lambda^{\mathcal{J}}_{\lambda}) = (-1)^{|\mathcal{J}|},
\end{equation*}
and, hence, $\mathscr{P}(k)$ is proved. In conclusion, $\mathcal{P}(|\mathcal{I}|)$ is proved and
\eqref{3.vii} holds.
\end{proof}

We can now conclude the proof of Theorem~\ref{th1.1}. According to Proposition \ref{pr3.3}, we have that
\begin{equation*}
\mathrm{deg}_{\mathrm{LS}}(I-\Phi_{\lambda},\Lambda^{\mathcal{I}}_{\lambda})=\pm 1\neq 0 \quad
\hbox{for all} \;\; \mathcal{I}\subseteq \{1,\dots,n\}.
\end{equation*}
Thus, taking into account that $0\notin \Lambda_\lambda^\mathcal{I}$ if $\mathcal{I}\neq \emptyset$, and
that
\begin{equation*}
   \Lambda^{\mathcal{J}}_{\lambda}\cap \Lambda^{\mathcal{K}}_{\lambda} = \emptyset\quad
   \hbox{if}\;\; \mathcal{J}\neq \mathcal{K},
\end{equation*}
by the existence property of the degree, we can deduce the existence of, at least, $2^n-1$ non-trivial solutions of the operator equation $u=\Phi_{\lambda}(u)$; at least one in each $\Lambda^{\mathcal{I}}_{\lambda}$ with $\mathcal{I}\neq\emptyset$. We already know that
these solutions must be strongly positive solutions of \eqref{1.1}. This ends the proof of Theorem~\ref{th1.1}.
\qed

\section*{Declarations}

\medskip

\subsubsection*{Authors' contributions}
All the authors equally contributed to the implementation of the research and to the writing of the manuscript.

\subsubsection*{Availability of data and materials}
No datasets were generated or analysed during the current study.

\subsubsection*{Conflict of interest}
The authors declare no competing interests.

\bibliographystyle{elsart-num-sort.bst}
\bibliography{FLGS-biblio}

\end{document}